\documentclass[11pt,a4paper,dvipdfmx]{article}
\usepackage{amsmath,amsthm,amssymb,color,enumerate,graphicx,hyperref,mathrsfs}
\usepackage[numbers]{natbib}

\def\A{\mathcal{A}}

\def\L{\mathcal{L}}
\def\M{\mathcal{M}}
\def\N{\mathbb{N}}

\def\R{\mathbb{R}}

\theoremstyle{plain}
\newtheorem{thm}{Theorem}[section]

\newtheorem{lem}{Lemma}[section]
\newtheorem{prop}{Proposition}[section]

\theoremstyle{definition}
\newtheorem{assmp}{Assumption}[section]

\newtheorem{exmp}{Example}[section]
\newtheorem*{exmp'}{Example 3.2$'$}

\newtheorem{rem}{Remark}[section]

\numberwithin{equation}{section}
\numberwithin{figure}{section}
\allowdisplaybreaks

\title{Value Functions and Optimality Conditions for Nonconvex Variational Problems with an Infinite Horizon in Banach Spaces\thanks{The authors are grateful to the two anonymous referees for their helpful comments and suggestions on the earlier  version of this manuscript.}}
\date{\today}
\author{H\'el\`ene Frankowska\thanks{The research of this author was supported by the Air Force Office of Scientific Research, USA under award number FA9550-18-1-0254. It also partially benefited from the FJMH Program PGMO and from the support to this program from EDF-THALES-ORANGE-CRITEO under grant PGMO 2018-0047H.} \\{\normalsize CNRS Institut de Math\'ematiques de Jussieu -- Paris Rive Gauche} \\ {\normalsize Sorbonne Universit\'e, Campus Pierre et Marie Curie} \\ {\normalsize Case 247, 4 Place Jussieu, 75252 Paris, France} \\ {\small e-mail: helene.frankowska@imj-prg.fr}
\and \\
Nobusumi Sagara\thanks{The research of this author benefited from the support of JSPS KAKENHI Grant Number JP18K01518 from the Ministry of Education, Culture, Sports, Science and Technology, Japan.} \\
{\normalsize Department of Economics, Hosei University} \\
{\normalsize 4342, Aihara, Machida, Tokyo, 194--0298, Japan} \\
{\small e-mail: nsagara@hosei.ac.jp}}

\begin{document}
\maketitle
\setcounter{page}{0}
\thispagestyle{empty}
\clearpage

\begin{abstract} 
We investigate the value function of an infinite horizon variational problem in the infinite-dimensional setting. Firstly, we provide an upper estimate of its Dini--Hadamard subdifferential in terms of the Clarke subdifferential of the Lipschitz continuous integrand and the Clarke normal cone to the graph of the set-valued mapping describing dynamics. Secondly, we derive a necessary condition for optimality in the form of an adjoint inclusion that grasps a connection between the Euler--Lagrange condition and the maximum principle. \\

\noindent
{\bfseries Key Words:} Infinite horizon, Dini--Hadamard subdifferential, Gelfand integral, differentiability of the value function, Euler--Lagrange condition, maximum principle, spatial Ramsey growth model. \\

\noindent
{\bfseries MSC2010 Subject Classification:} Primary: 34A60, 49J50, 49J52; Secondary: 49J53, 49K15, 90C39 \\

\noindent
{\bfseries OR/MS Subject Classification:} Dynamic programming/optimal control: Applications; Deterministic; Programming: Nondifferentiable
\end{abstract}


\section{Introduction}
Optimal control and dynamic programming are instrumental cornerstones of modern economic growth theory originated in \citet{ra28}. In the general reduced model of capital accumulation, necessary (and sufficient) conditions for optimality are employed under the convexity assumptions on utility functions and technologies for the investigation of the existence of competitive equilibria and support prices; see \citet{bs82,ma82,ta82,ta84}. Such well-behaving properties are prominent in convex problems of optimal control explored in the classical work by \citet{ro70} with the full power of duality theory in convex analysis. In particular, one of the  advantages in convex economic models lies in the crucial observation that the differentiability of the value function is guaranteed under the smoothness assumptions on the data; see \citet{bs79,bs82,blv96,rzs12,ta84}.  

On the contrary, the absence of convexity and smoothness are two major sources of complex economic dynamics in continuous time as illustrated in \citet{alv99,dh81,hk03,sk78,wa03}. More to the point, the difficulty with the lack of convexity assumptions results in the failure of differentiability of the value function even if the underlying data are smooth. Without convexity, one can expect at best the Lipschitz continuity of the value function even for smooth problems. This causes problems with expressing optimality conditions in many nonconvex economic growth models when one attempts to apply the Hamilton--Jacobi--Bellman (HJB) equation. Recall that the value function is its unique solution whenever it is smooth. 

The well-known failure of differentiability of the value function has stimulated two alternative approaches in optimal control theory. One is the application of a ``generalized'' subdifferential calculus along the lines of \citet{cl83}, which eventually leads to the formulation of a relation between the maximum principle and dynamic programming whenever the value function is locally Lipschitz continuous; see \citet{cv83,cv87}. The other independent development is the concept of ``viscosity solutions'' to the HJB equation initiated by \citet{li82} (see also \citet{cel84,crl83}), which makes use of the notion of Fr\'echet super- and subdifferentials to claim that the value function is the unique viscosity solution of the HJB equation. For the connections between the maximum principle and the superdifferentials of the value function, see \citet{fr89a,fr02}. 

With this background in mind, we investigate the value function of an infinite horizon variational problem in the setting of an infinite-dimensional generalized control system. Our primary concern here is to go beyond convexity, smoothness, and finite dimensionality aiming the possible applications to dynamic optimization in economic theory. Since the optimal economic growth models are identified with a specific form of the general equilibrium model with single representable consumer and firm, we can deal with a rich class of commodity spaces for capital stock, which appears as a Sobolev space. In particular, spatial Ramsey growth models involve a location of each agent along the lines of \citet{ho29}, in which infinite-dimensional commodity spaces naturally arise; see \citet{bcf13,bcz09,br04,bxy14}. Applying our general result, we obtain another necessary condition for optimality in spatial Ramsey growth models.

The purpose of this paper is twofold. Firstly, we provide an upper estimate of the Dini--Hadamard subdifferential of the value function in terms of the Clarke subdifferential of the Lipschitz continuous integrand and the Clarke normal cone to the set-valued mapping describing dynamics. As a result, we obtain the strict differentiability of the value function under the Fr\'echet differentiability of the integrand, which removes completely the convexity assumptions of the earlier works by \citet{bs79,bs82,blv96,rzs12,ta84}. For the (sub)differentiability of the value function in the context of finite dimensional control systems with a finite horizon, see the lecture notes \citet{fr02}. 

Secondly, under an interiority assumption we derive a necessary condition for optimality in the form of an adjoint inclusion that grasps a connection between the Euler--Lagrange condition and the maximum principle. Our interiority assumption is weaker than those in \citet{bs79,bs82,blv96,ta82,ta84}. On the other hand, when dynamics are described by a control system, such interiority assumption may be omitted. To deal with the adjoint variable in dual spaces, we introduce the Gelfand integrals of the Dini--Hadamard and Clarke subdifferential mappings, which is a new feature that does not arise in the context of finite-dimensional control systems. 

For the finite-dimensional control systems, necessary conditions with or without convexity assumptions using limiting subdifferentials were obtained in \citet{io97,vz97} in the finite horizon setting. The ones in the infinite horizon setting using Dini--Hadamard, Clarke, and limiting subdifferentials were derived in \citet{ac79,cf18,sa10,ye93}. For control systems in Hilbert spaces, a necessary condition under the convexity assumptions was obtained in \citet{ba78b} in the infinite horizon setting. For Banach spaces and semilinear control systems, the necessary and sufficient conditions were derived in \citet{cf92} in the finite horizon setting when the set of velocities is convex. 

The organization of the paper is as follows. Section 2 collects preliminary results on subdifferential calculus on Banach spaces. In Section 3 we formulate the nonconvex variational problem under investigation with the standing hypotheses and demonstrate the Lipschitz continuity and subdifferentiability of the value function. We derive in Section 4 necessary conditions for the variational and optimal control problems. Section 5 applies our main result to spatial Ramsey growth models. Appendices I and II discuss Gelfand integral of multifunctions and the Gelfand integrability of the Dini--Hadamard and Clarke subdifferential mappings, and the proofs of auxiliary results and lemmas needed to obtain the main results.

\section{Preliminaries}
Let $(E,\| \cdot \|)$ be a real Banach space with the dual system $\langle E^*,E \rangle$, where $E^*$ is the norm dual of $E$. A real-valued function $\varphi:E\to \R$ is said to be \textit{Gateaux differentiable} at $\bar{x}\in E$ if there exists an element $\nabla\varphi(\bar{x})\in E^*$ such that
\begin{equation}
\label{eq1}
\lim_{\theta\to 0}\frac{\varphi(\bar{x}+\theta v)-\varphi(\bar{x})}{\theta}=\langle \nabla\varphi(\bar{x}),v \rangle
\end{equation}
for every $v\in E$; $\nabla\varphi(\bar{x})$ is called the \textit{Gateaux derivative} of $\varphi$ at $\bar{x}$. If the convergence in \eqref{eq1} is uniform in $v\in C$ for every bounded subset $C$ of $E$, then $\varphi$ is said to be \textit{Fr\'echet differentiable} at $\bar{x}$ and $\nabla\varphi(\bar{x})$ is called the \textit{Fr\'echet derivative} of $\varphi$ at $\bar{x}$. A function $\varphi$ is said to be \textit{strictly differentiable} at $\bar{x}$ if there exists $\nabla\varphi(\bar{x})\in E^*$ such that
\begin{equation}
\label{eq2}
\lim_{\begin{subarray}{c} x\to \bar{x} \\ \theta\to 0 \end{subarray}}\frac{\varphi(x+\theta v)-\varphi(x)}{\theta}=\langle \nabla\varphi(\bar{x}),v \rangle
\end{equation}
and the convergence in \eqref{eq2} is uniform in $v\in C$ for every compact subset $C$ of $E$. Then $\nabla\varphi(\bar{x})$ is called the \textit{strict derivative} of $\varphi$ at $\bar{x}$. If $\varphi$ is strictly differentiable at $\bar{x}$, then $\varphi$ is Lipschitz near $\bar{x}$; see \citet[Proposition 2.2.1]{cl83}. A function $\varphi$ is said to be  \textit{continuously differentiable} at $\bar{x}$ if $\varphi$ is Gateaux differentiable at every $x$ in a neighborhood $O$ of $\bar{x}$ and the mapping $x\mapsto \nabla\varphi(x)$ is continuous from $O$ to $E^*$; $\varphi$ is called a \textit{$C^1$-function} on $E$ if $\varphi$ is continuously differentiable at any point in $E$. If $\varphi$ is continuously differentiable at $\bar{x}$, then $\varphi$ is strictly differentiable at $\bar{x}$; see \citet[Corollary, p.\,32]{cl83}. A norm $\| \cdot \|$ on a Banach space $E$ is said to be Gateaux (resp.\ Fr\'echet) differentiable if $\| \cdot \|$ is Gateaux (resp.\ Fr\'echet) differentiable on the open set $E\setminus \{ 0 \}$. 

The \textit{support function} $s(\cdot,C):E^*\to \R\cup \{+\infty \}$ of a nonempty subset $C$ of $E$ is given by $s(x^*,C)=\sup_{x\in C}\langle x^*,x \rangle$. The \textit{polar} $C^0$ of $C$ is the set $C^0=\{ x^*\in E^*\mid s(x^*,C)\le 0 \}$. The support function $s:(\cdot, K):E\to \R\cup \{+\infty \}$ of a nonempty subset $K$ of $E^*$ is defined by $s(x, K)=\sup_{x^*\in K}\langle x^*,x \rangle$. The polar $K^0$ of $K$ is the set $K^0=\{ x\in E\mid s(x,K)\le 0 \}$. 

Let $\varphi:E\to \R\cup \{ +\infty \}$ be an extended real-valued function on $E$. The \textit{effective domain} of $\varphi$ is the set of points where $\varphi$ is finite and is denoted by $\mathrm{dom}\,\varphi:=\{ x\in E\mid \varphi(x)<+\infty\}$. If $\varphi$ is Lipschitz near $\bar{x}\in \mathrm{dom}\,\varphi$, then its \textit{Clarke directional derivative} at $\bar{x}$ in the direction $v\in E$ is defined by
$$
\varphi^\circ(\bar{x};v):=\limsup_{\begin{subarray}{c} x\to\bar{x} \\ \theta\downarrow 0 \end{subarray}}\frac{\varphi(x+\theta v)-\varphi(x)}{\theta}
$$
and the \textit{Clarke subdifferential} of $\varphi$ at $\bar{x}$ is defined by 
$$
\partial^\circ \varphi(\bar{x}):=\{ x^*\in E^*\mid \langle x^*,v\rangle\le \varphi^\circ(\bar{x};v)\ \forall v\in E \}. 
$$
Since the function $v\mapsto \varphi^\circ(\bar{x};v)$ is positively homogeneous and subadditive, the set $\partial^\circ \varphi(\bar{x})$ is nonempty by the Hahn--Banach theorem, weakly$^*\!$ compact, and convex in $E^*$. Furthermore, the Clarke directional derivative is the support function of the Clarke subdifferential
$$
\varphi^\circ(\bar{x};v)=s(v,\partial^\circ \varphi(\bar{x}))
$$ 
for every $v\in E$; see \citet[Propositions 2.1.1 and 2.1.2]{cl83}. Recall that the function $\varphi$ that is Lipschitz near $\bar{x}$ is said to be \textit{regular} at $\bar{x}$ if the classical directional derivative
$$
\varphi'(\bar{x};v):=\lim_{\theta\downarrow 0}\frac{\varphi(\bar{x}+\theta v)-\varphi(\bar{x})}{\theta}
$$
exists and $\varphi'(\bar{x};v)=\varphi^\circ(\bar{x};v)$ for every $v\in E$. 

Let $d_C:E\to \R$ be the \textit{distance function} from a nonempty subset $C$ of $E$ defined by $d_C(x):=\inf_{\xi\in C}\| x-\xi \|$. Then $d_C$ is nonexpansive (i.e., Lipschitz of rank one) on $E$. Let $\bar{x}$ be a point in $C$. A vector $v\in E$ is called a \textit{tangent} to $C$ at $\bar{x}$ if $d_C^\circ(\bar{x};v)=0$. The set of all tangents to $C$ at $\bar{x}$ is called the \textit{Clarke tangent cone} to $C$ at $\bar{x}$ and is denoted by 
$$
T_C(\bar{x}):=\{ v\in E\mid d_C^\circ(\bar{x};v)=0 \}.
$$
Then $T_C(\bar{x})$ is a closed convex cone because $v\mapsto d_C^\circ(\bar{x};v)$ is nonnegative, positively homogeneous, and continuous. An intrinsic characterization of $T_C(\bar{x})$ that is independent of the use of a distance function is as follows: $v\in T_C(\bar{x})$ if and only if for every sequence $\{ x_n \}_{n\in \N}$ in $C$ with $x_n\to \bar{x}$ and every sequence $\{ \theta_n \}_{n\in \N}$ of positive real numbers with $\theta_n\downarrow 0$, there is a sequence $\{ v_n \}_{n\in \N}$ in $E$ with $v_n\to v$ such that $x_n+\theta_nv_n\in C$ for each $n\in \N$; see \citet[Theorem 2.4.5]{cl83}. Let $B$ be the open unit ball in $E$. Define the \textit{contingent cone} $K_C(\bar{x})$ of tangents to $C$ at $\bar{x}$ by
$$
K_C(\bar{x}):=\left\{ v\in E \mid \forall \varepsilon>0\,\exists\theta\in (0,\varepsilon)\,\exists w\in v+\varepsilon B: \bar{x}+\theta w\in C \right\}.
$$   
Then $v\in K_C(\bar{x})$ if and only if there exist a sequence $\{ \theta_n \}_{n\in \N}$ of positive real numbers with $\theta_n\downarrow 0$ and a sequence $\{ v_n \}_{n\in \N}$ in $E$ with $v_n\to v$ such that $\bar{x}+\theta_nv_n\in C$ for each $n\in \N$. It is evident that $T_C(\bar{x})\subset K_C(\bar{x})$, but $K_C(\bar{x})$ is not necessarily convex. The set $C$ is said to be \textit{regular} at $\bar{x}$ if $T_C(\bar{x})=K_C(\bar{x})$. The polar of $T_C(\bar{x})$ is called the \textit{Clarke normal cone} to $C$ at $\bar{x}$, which is given by 
$$
N_C(\bar{x})=\{ x^*\in E^* \mid \langle x^*,v \rangle\le 0 \ \forall v\in T_C(\bar{x}) \}. 
$$
The Clarke normal cone is characterized by $N_C(\bar{x})=\mathit{w}^*\text{-}\mathrm{cl}\{ \bigcup_{\lambda\ge 0}\lambda\partial^\circ d_C(\bar{x}) \}$ (see \citet[Proposition 2.4.2]{cl83}), where the right-hand side of the above equality means the weak$^*\!$ closure of the set. It follows from the bipolar theorem (see \citet[Theorem 2.4.3]{af90}) that $T_C(\bar{x})$ is the polar of $N_C(\bar{x})$, i.e., $T_C(\bar{x})=\{ v\in E\mid \langle x^*,v \rangle \le 0\ \forall x^*\in N_C(\bar{x}) \}$. Denote by $\mathrm{epi}\,\varphi=\{ (x,r)\in E\times \R\mid \varphi(x)\le r \}$ the \textit{epigraph} of $\varphi$. If $\varphi$ is Lipschitz near $\bar{x}\in \mathrm{dom}\,\varphi$, then $T_{\mathrm{epi}\,\varphi}(\bar{x},\varphi(\bar{x}))=\mathrm{epi}\,\varphi^\circ(\bar{x};\cdot)$ (see \citet[Theorem 2.4.9]{af90}), and hence, $\varphi^\circ(\bar{x};v)=\inf\{ r\in \R\mid (v,r)\in T_{\mathrm{epi}\,\varphi}(\bar{x},\varphi(\bar{x})) \}$.   
Therefore, if $\varphi$ is Lipschitz near $\bar{x}$, then 
$$
\partial^\circ\varphi(\bar{x})=\{ x^*\in E^*\mid (x^*,-1)\in N_{\mathrm{epi}\,\varphi}(\bar{x},\varphi(\bar{x})) \}.
$$

The \textit{lower directional derivative} (or \textit{contingent}, or \textit{Dini--Hadamard directional subderivative}) of $\varphi:E\to \R\cup \{ +\infty \}$ at $\bar{x}\in \mathrm{dom}\,\varphi$ in the direction $v\in E$ is defined by  
$$
\varphi^-(\bar{x};v):=\liminf_{\begin{subarray}{c}u\to v \\ \theta\downarrow 0 \end{subarray}}\frac{\varphi(\bar{x}+\theta u)-\varphi(\bar{x})}{\theta}\in \R\cup \{ \pm\infty \}
$$
and the \textit{upper directional derivative} (or \textit{Dini--Hadamard directional super\-derivative}) of $\varphi$ at $\bar{x}$ in the direction $v\in E$ is defined by  
$$
\varphi^+(\bar{x};v):=\limsup_{\begin{subarray}{c}u\to v \\ \theta\downarrow 0 \end{subarray}}\frac{\varphi(\bar{x}+\theta u)-\varphi(\bar{x})}{\theta}\in \R\cup \{ \pm\infty \}.
$$
The \textit{Dini--Hadamard subdifferential} of $\varphi$ at $\bar{x}$ is defined by
$$
\partial^-\varphi(\bar{x}):=\{ x^*\in E^*\mid \langle x^*,v \rangle \le \varphi^-(\bar{x};v) \ \forall v\in E \}
$$
and the \textit{Dini--Hadamard superdifferential} of $\varphi$ at $\bar{x}$ is defined by
$$
\partial^+\varphi(\bar{x}):=\{ x^*\in E^*\mid \langle x^*,v \rangle \ge \varphi^+(\bar{x};v) \ \forall v\in E \}.
$$
Because of the plus-minus symmetry with $\varphi^-(x;v)=-(-\varphi)^+(x;v)$ and $\partial^-\varphi(\bar{x})=-\partial^+(-\varphi)(\bar{x})$, it is enough to investigate lower directional derivatives and Dini--Hadamard subdifferentials in what follows. Since 
$$
K_{\mathrm{epi}\,\varphi}(\bar{x},\varphi(\bar{x}))=\mathrm{epi}\,\varphi^-(\bar{x};\cdot)
$$ 
(see \citet[Propositions 6.1.3 and 6.1.4]{af90}), 
$$
\varphi^-(\bar{x};v)=\inf\{ r\in \R\mid (v,r)\in K_{\mathrm{epi}\,\varphi}(\bar{x},\varphi(\bar{x})) \}\in \R\cup \{ \pm\infty \}
$$ 
with the convention that $\inf \emptyset=+\infty$. Therefore, if $\bar{x}\in \mathrm{dom}\,\varphi$, then 
$$
\partial^-\varphi(\bar{x})=\{ x^*\in E^*\mid (x^*,-1)\in K_{\mathrm{epi}\,\varphi}(\bar{x},\varphi(\bar{x}))^0 \}.
$$

Unlike Clarke directional derivatives, the lower directional derivative mapping $v\mapsto \varphi^-(\bar{x};v)$ fails to be convex although it is positively homogeneous. Thus, except for a smooth or a convex function $\varphi$, it is rather typical that $\partial^-\varphi(\bar{x})$ is empty at some points for a lower semicontinuous or even a locally Lipschitz function. Note that $\partial^-\varphi(\bar{x})$ is weakly$^*\!$ closed and convex. If $\varphi$ is locally Lipschitz, then $\varphi^-(\bar{x},v)\le \varphi^\circ(\bar{x};v)$ for every $v\in E$, and hence, $\partial^-\varphi(\bar{x})\subset \partial^\circ \varphi(\bar{x})$. In particular, if $\varphi$ is also regular at $\bar{x}$, then $\varphi^-(\bar{x},v)=\varphi^\circ(\bar{x};v)$ for every $v\in E$ and $\partial^-\varphi(\bar{x})=\partial^\circ \varphi(\bar{x})$. Note also that if $\varphi$ has the strict derivative $\nabla\varphi(\bar{x})$ at $\bar{x}\in E$, then $\partial^-\varphi(\bar{x})=\{ \nabla\varphi(\bar{x}) \}$. 

A generic existence of Dini--Hadamard subdifferentials is assured in the following result. 

\begin{thm}[\citet{io84,io17}]
\label{thm0}
Let $E$ be a Banach space admitting an equivalent Gateaux differentiable norm and $\varphi:E\to \R\cup \{ +\infty \}$ be a lower semicontinuous function. Then the set $\{ x\in E\mid \partial^-\varphi(x)\ne \emptyset \}$ is dense in $\mathrm{dom}\,\varphi$. 
\end{thm}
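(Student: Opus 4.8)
The plan is to adapt the finite-dimensional argument of \citet[Proposition 2]{io84}, where the Euclidean norm and the Ekeland variational principle are used, by replacing them with an equivalent Gateaux differentiable norm and the smooth variational principle of \citet{bp87}. Fix $\bar{x}\in\mathrm{dom}\,\varphi$ and $\varepsilon>0$; the goal is to produce a point $x$ with $\| x-\bar{x} \|<\varepsilon$ and $\partial^-\varphi(x)\ne\emptyset$ (note that $\partial^-\varphi(x)\ne\emptyset$ already forces $x\in\mathrm{dom}\,\varphi$). As a first reduction, since equivalent norms induce the same topology on $E$ and the same dual $E^*$, the lower directional derivative $\varphi^-(\cdot;\cdot)$ and hence $\partial^-\varphi(\cdot)$ are unchanged under renorming; so I may assume that $\| \cdot \|$ itself is Gateaux differentiable on $E\setminus\{0\}$.

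Next I would localize. By lower semicontinuity of $\varphi$ at $\bar{x}$ there is $r\in(0,\varepsilon)$ with $\varphi(x)>\varphi(\bar{x})-1$ whenever $\| x-\bar{x} \|\le r$. Define $\tilde{\varphi}:E\to\R\cup\{+\infty\}$ by $\tilde{\varphi}(x)=\varphi(x)$ if $\| x-\bar{x} \|\le r$ and $\tilde{\varphi}(x)=+\infty$ otherwise. Then $\tilde{\varphi}$ is lower semicontinuous, bounded below by $\varphi(\bar{x})-1$, finite at $\bar{x}$, and coincides with $\varphi$ on the open ball $\{ x\mid \| x-\bar{x} \|<r \}$, where in particular $\partial^-\tilde{\varphi}=\partial^-\varphi$. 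Since $\bar{x}$ is $\eta$-optimal for $\tilde{\varphi}$ with $\eta:=\tilde{\varphi}(\bar{x})-\inf_E\tilde{\varphi}+1\in(0,\infty)$, the Borwein--Preiss principle, applied with exponent $p=2$ and localization parameter any $\lambda\in(0,r)$, yields a point $y$ with $\| y-\bar{x} \|\le\lambda<r$, points $\{ v_n \}_{n\in\N}$ with $\| v_n-y \|\le\lambda$, and weights $\mu_n\ge 0$ with $\sum_n\mu_n=1$, such that $x\mapsto\tilde{\varphi}(x)+g(x)$ attains its global minimum over $E$ at $y$, where $g(x):=(\eta/\lambda^2)\sum_n\mu_n\| x-v_n \|^2$; in particular $\tilde\varphi(y)<+\infty$.

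The two facts I need about $g$ are local Lipschitzness and Gateaux differentiability near $y$. For Lipschitzness: on a ball $B(y,\rho)$ each map $x\mapsto\| x-v_n \|^2$ has Lipschitz constant $2(\rho+\lambda)$, uniformly in $n$, since $\| x-v_n \|\le\rho+\lambda$ there; as $\sum_n\mu_n=1$, $g$ is $(\eta/\lambda^2)\,2(\rho+\lambda)$-Lipschitz on $B(y,\rho)$. For Gateaux differentiability: with $p=2$ the function $\| \cdot-v_n \|^2$ is Gateaux differentiable at every point (at $v_n$ the derivative is $0$, since $\| \theta u \|^2/\theta=\theta\| u \|^2\to 0$; away from $v_n$ use the chain rule and differentiability of the norm), and differentiation of the series $\sum_n\mu_n\| x-v_n \|^2$ term by term in any direction $u$ is justified by dominated convergence for the probability measure $(\mu_n)_n$, using the uniform bound $\left|\frac{\| x+\theta u-v_n \|^2-\| x-v_n \|^2}{\theta}\right|\le 2(\rho+\lambda)\| u \|$; the corresponding series $\sum_n\mu_n\nabla(\| \cdot-v_n \|^2)(x)$ converges in $E^*$, so $g$ is Gateaux differentiable on $B(y,\rho)$. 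A locally Lipschitz, Gateaux differentiable function is moreover differentiable in the stronger Hadamard sense: for every $v\in E$ one has $\lim_{u\to v,\,\theta\downarrow 0}(g(y+\theta u)-g(y))/\theta=\langle\nabla g(y),v\rangle$, since $|g(y+\theta u)-g(y+\theta v)|\le L\theta\| u-v \|$ for $\theta$ small.

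It remains to combine these. Hadamard differentiability of $g$ at $y$ gives $(\tilde{\varphi}+g)^-(y;v)=\tilde{\varphi}^-(y;v)+\langle\nabla g(y),v\rangle$ for every $v\in E$, hence $\partial^-(\tilde{\varphi}+g)(y)=\partial^-\tilde{\varphi}(y)+\nabla g(y)$. Since $y$ minimizes $\tilde{\varphi}+g$ over $E$, every difference quotient $((\tilde{\varphi}+g)(y+\theta u)-(\tilde{\varphi}+g)(y))/\theta$ is nonnegative, so $0\in\partial^-(\tilde{\varphi}+g)(y)$ and therefore $-\nabla g(y)\in\partial^-\tilde{\varphi}(y)$. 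As $\| y-\bar{x} \|<r$ and $\tilde{\varphi}=\varphi$ on that open ball, $\partial^-\tilde{\varphi}(y)=\partial^-\varphi(y)$, so $\partial^-\varphi(y)\ne\emptyset$ with $\| y-\bar{x} \|<r<\varepsilon$, which proves density. I expect the main technical obstacle to be precisely the verification in the third paragraph that the Borwein--Preiss perturbation $g$ is genuinely Gateaux (hence Hadamard) differentiable — that the infinite series may be differentiated term by term — together with the accompanying sum rule for $\partial^-$; the remaining steps are routine bookkeeping with the variational principle.
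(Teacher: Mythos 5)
Your proposal is correct and follows essentially the same route as the paper: localize $\varphi$ by adding the indicator of a small ball, apply the Borwein--Preiss smooth variational principle with the Gateaux differentiable renorm and a quadratic gauge, verify that the resulting perturbation $g$ is Gateaux (indeed Hadamard, via the local Lipschitz estimate) differentiable, and conclude $-\nabla g(y)\in\partial^-\varphi(y)$ at the minimizer $y$ of the perturbed function. Your third paragraph just makes explicit the term-by-term differentiation and Lipschitz bound that the paper handles with its displayed inequality for $|\psi(y+\theta u)-\psi(y+\theta v)|$.
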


\noindent
We recall that any separable Banach space has an equivalent Gateaux differentiable norm; see \citet[Theorem 8.2]{fhhmz11}.

The next extends the well-known representation of the normal cone to the set determined by the inequality constraint. 

\begin{prop}
\label{exmp6}
Let $\varphi_i:E\to \R$, $i=1,2,\dots,m$, be continuous real-valued functions and 
$$
C:=\left\{ x\in E\mid \varphi_i(x)\le 0,\,i=1,2,\dots,m \right\}. 
$$ 
Define the active constraint indices at $\bar{x}\in C$ by $I(\bar{x}):=\{ i\in \{ 1,2,\dots,m \}\mid \varphi_i(\bar{x})=0 \}$. If $\varphi_i$ is strictly differentiable at $\bar{x}$ for each $i\in I(\bar{x})$ and the constraint qualification $0\not\in \mathrm{co}\,\{ \nabla \varphi_i(\bar{x})\mid i\in I(\bar{x}) \}$ is satisfied, then
$$
N_C(\bar{x})=\left\{ \sum_{i\in I(\bar{x})}\lambda_i\nabla \varphi_i(\bar{x})\in E^*\mid \lambda_i\ge 0 \ \forall i\in I(\bar{x}) \right\}.
$$
\end{prop}

\noindent
The proof is provided in Subsection \ref{apdx0} because we could not find this result in the literature for arbitrary Banach spaces.

\section{Value Functions for an Infinite Horizon Problem}
\subsection{Nonconvex Variational Problems}
Denote by $\R_+=[0,\infty)$ the unbounded interval of the real line with the Lebesgue measure and the Lebesgue $\sigma$-algebra $\L$. A function $x:\R_+\to E$ is said to be \textit{simple} if there exist $x_1,x_2,\dots,x_n\in E$ and $I_1,I_2,\dots,I_n\in \L$ such that $x(\cdot)=\sum_{i=1}^nx_i\chi_{I_i}$, where $\chi_{I_i}(t)=1$ if $t\in I_i$ and $\chi_{I_i}(t)=0$ otherwise. A function $x(\cdot)$ is said to be \textit{strongly measurable} if there exists a sequence of simple functions $\{ x_n(\cdot) \}_{n\in \N}$ from $\R_+$ to $E$ such that $\| x_n(t)-x(t) \|\to 0$ a.e.\ $t\in \R_+$. A strongly measurable function $x(\cdot)$ is \textit{locally Bochner integrable} if it is Bochner integrable on every compact subset $I$ of $\R_+$, that is, $\int_I\| x(t) \|dt<\infty$, where the \textit{Bochner integral} of $x(\cdot)$ over $I$ is defined by $\int_I x(t)dt:=\lim_n\int_I x_n(t)dt$. Let $L^1_{\mathrm{loc}}(\R_+,E)$ be the space of (the equivalence classes of) locally Bochner integrable functions from $\R_+$ to $E$. 

A function $x(\cdot):\R_+\to E$ is said to be \textit{strongly differentiable} at $t>0$ if there exists $v\in E$ such that 
$$
\lim_{h\to 0}\frac{x(t+h)-x(t)}{h}=v. 
$$
The vector $v$ is denoted by $\dot{x}(t)$ and called the \textit{strong derivative} of $x$ at $t$. Denote by $W^{1,1}_{\mathrm{loc}}(\R_+,E)$ the Sobolev space, which consists of locally Bochner integrable functions $x:\R_+\to E$ whose strong derivative $\dot{x}(t)$ exists a.e.\ $t\in \R_+\setminus \{ 0 \}$ with $\dot{x}(\cdot)\in L^1_{\mathrm{loc}}(\R_+,E)$ and $x(t)=\int_0^t\dot{x}(s)ds+x(0)$ for every $t\in \R_+$. For each $n\in \N$, define the seminorm $\mu_n$ on $W^{1,1}_{\mathrm{loc}}(\R_+,E)$ by $\mu_n(x(\cdot))=\int_0^n(\| x(t) \|+\| \dot{x}(t) \|)dt$. Since $\{ \mu_n \}_{n\in \N}$ is a countable separating family of seminorms, $W^{1,1}_{\mathrm{loc}}(\R_+,E)$ is a Fr\'echet space under the compatible metric $d$ given by
$$
d(x(\cdot),y(\cdot))=\max_{n\in \N}\frac{\mu_n(x(\cdot)-y(\cdot))}{2^n(1+\mu_n(x(\cdot)-y(\cdot)))}, \quad x(\cdot),y(\cdot)\in W^{1,1}_{\mathrm{loc}}(\R_+,E).  
$$
An element in $W^{1,1}_{\mathrm{loc}}(\R_+,E)$ is called an \textit{arc}. When $\R_+$ is replaced by a compact interval $I$ of $\R_+$, the above definition simply leads to that of the Sobolev space $W^{1,1}(I,E)$ normed by $\| x(\cdot) \|_{1,1}=\int_I(\| x(t) \|+\| \dot{x}(t) \|)dt$.  

Let $L:\R_+\times E\times E\to \R\cup\{ +\infty \}$ be an integrand. Given an arc $x(\cdot)\in W^{1,1}_{\mathrm{loc}}(\R_+,E)$, the improper integral is defined by 
$$
\int_t^\infty L(s,x(s),\dot{x}(s))ds=\lim_{T\to \infty}\int_t^T L(s,x(s),\dot{x}(s))ds
$$
for every $t\in \R_+$ provided that the above limit does exist. Let $\Gamma:\R_+\times E\rightsquigarrow E$ be a multifunction. The variational problem under investigation is to minimize the improper integral functional over the feasibility constraint governed by the differential inclusion:
\begin{equation}
\label{P}
\begin{aligned}
& \inf_{x(\cdot)\in W^{1,1}_{\mathrm{loc}}([t,\infty),E)}\int_t^\infty L(s,x(s),\dot{x}(s))ds \\
& \text{s.t.}\ \dot{x}(s)\in \Gamma(s,x(s)) \ \text{a.e.\ $s\in [t,\infty)$}, \ x(t)=\xi.
\end{aligned}
\tag{$\mathrm{P}_t$}
\end{equation}
An arc satisfying the above differential inclusion is called an \textit{admissible trajectory}. Define the set of admissible trajectories starting at time $t\in \R_+$ from a given initial condition $\xi\in E$ by
$$
\A_{(t,\xi)}:=\left\{ x(\cdot)\in W^{1,1}_{\mathrm{loc}}([t,\infty),E) \mid \dot{x}(s)\in \Gamma(s,x(s))\,\,\text{a.e.\,$s\in [t,\infty)$},\,x(t)=\xi \right\}.
$$
Then the \textit{value function} $V:\R_+\times E\to \R\cup\{ \pm\infty \}$ is defined by
$$
V(t,\xi):=\inf_{x(\cdot)\in \A_{(t,\xi)}}\int_t^\infty L(s,x(s),\dot{x}(s))ds. 
$$
Here, we set $\inf \emptyset=+\infty$ if $\A_{(t,\xi)}$ is empty or if for every $x(\cdot)\in \A_{(t,\xi)}$ the integral $\int_t^\infty L(s,x(s),\dot{x}(s))ds$ is not well-defined. The \textit{effective domain} of $V$ is given by $\mathrm{dom}\,V=\{ (t,x)\in \R_+\times E \mid V(t,x)<+\infty \}$; $V$ is said to be \textit{proper} if $\mathrm{dom}\,V$ is nonempty and $V(t,\xi)>-\infty$ for every $(t,\xi)\in \R_+\times E$. For every $(t,\xi)\in \mathrm{dom}\,V$, an admissible trajectory $x(\cdot)\in \A_{(t,\xi)}$ is said to be \textit{optimal} for \eqref{P} if it satisfies $\int_t^\infty L(s,x(s),\dot{x}(s))ds=V(t,\xi)>-\infty$. For given $x\in E$, the multifunction $\Gamma(\cdot,x)\rightsquigarrow E$ is said to be \textit{measurable} if the set $\{ t\in \R_+\mid \Gamma(t,x)\cap O\ne \emptyset \}$ belongs to $\L$ for every open subset $O$ of $E$. 

The standing hypothesis are described as follows. 
\begin{description}
\item[$\mathbf{(H_1)}$] $\A_{(t,\xi)}$ is nonempty for every $(t,\xi)\in \R_+\times E$. 
\item[$\mathbf{(H_2)}$] $L(\cdot,x,y)$ is measurable for every $(x,y)\in E\times E$. 
\item[$\mathbf{(H_3)}$] There exist an integrable function $l_1:\R_+\to \R_+$ and a locally bounded, integrable function $l_2:\R_+\to \R_+$ such that 
$$
|L(t,0,0)|\le l_1(t)
$$
and  
$$
|L(t,x,y)-L(t,x',y')|\le l_1(t)\| x-x' \|+l_2(t)\| y- y' \|
$$ 
for every $t\in \R_+$ and every $(x,y), (x',y')\in E\times E$.
\item[$\mathbf{(H_4)}$] $\Gamma$ has nonempty closed values.
\item[$\mathbf{(H_5)}$] $\Gamma(\cdot,x)$ is measurable for every $x\in E$.
\item[$\mathbf{(H_6)}$] There exist a locally integrable function $\gamma:\R_+\to \R_+$ such that 
$$
\Gamma(t,0)\subset \gamma(t)B
$$
and 
$$
\Gamma(t,x)\subset \Gamma(t,x')+\gamma(t)\| x-x' \|B
$$ 
for every $t\in \R_+$ and $x,x'\in E$.
\item[$\mathbf{(H_7)}$] The Lipschitz modulus functions satisfy the integrability conditions: 
\begin{eqnarray*}
\int_0^\infty\left[ \exp\left (\int_0^s\gamma(\tau)d\tau \right)\left( 1+\int_0^s\gamma(\tau)d\tau \right)\left( l_1(s)+l_2(s)\gamma(s) \right) \right]ds<\infty.
\end{eqnarray*}
\end{description}
Since the integrand $L$ is assumed to be a Carath\'{e}odory function in $\mathrm{(H_2)}$ and $\mathrm{(H_3)}$, it is jointly measurable on $\R_+\times E\times E$ with respect to the product $\sigma$-algebra $\L\otimes\mathrm{Borel}(E,\| \cdot \|)\otimes\mathrm{Borel}(E,\| \cdot \|)$ whenever $E$ is a separable Banach space; see \citet[Lemma 8.2.6]{af90}. Hence, $L$ is a normal integrand; see Appendix \ref{subsec2} for the definition. Hypothesis $\mathrm{(H_7)}$ guarantees the integrable boundedness of $\{ L(\cdot,x(\cdot),\dot{x}(\cdot))\mid x(\cdot)\in \A_{(t,\xi)} \}$ over the interval $[t,\infty)$ for every $(t,\xi)\in \R_+\times E$ and is needed to prove Theorem \ref{thm2} below.

In the rest of the paper, $E$ is assumed to be separable. 

\begin{thm}
\label{thm2}
If $\mathrm{(H_1)}$--$\mathrm{(H_7)}$ hold, then $V$ is bounded and lower semicontinuous on $\R_+\times E$, and $V(t,\,\cdot\,)$ is Lipschitz of rank $k(t)$ on $E$ for every $t\in \R_+$ with a continuous decreasing function $k:\R_+\to \R_+$ satisfying $k(t)\to 0$ as $t\to \infty$. 
\end{thm}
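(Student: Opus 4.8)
The plan is to establish the three assertions in the order in which they depend on one another: first the finiteness of $V$ together with a linear growth bound (the ``boundedness''), then the Lipschitz estimate in the state variable, and finally joint lower semicontinuity, which uses both.

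\textbf{Step 1 (a priori bounds and finiteness).} Fix $(t,\xi)$ and $x(\cdot)\in\A_{(t,\xi)}$. Hypothesis $\mathrm{(H_6)}$ gives $\|\dot x(s)\|\le\gamma_0(s)+\gamma_1(s)\|x(s)\|$ a.e.\ on $[t,\infty)$, so from $x(t)=\xi$ and Gronwall's inequality $\|x(s)\|\le\bigl(\|\xi\|+\int_t^s\gamma_0(\tau)d\tau\bigr)\exp\bigl(\int_t^s\gamma_1(\tau)d\tau\bigr)$. Inserting this into the bound $|L(s,x(s),\dot x(s))|\le l_0(s)+l_2(s)\gamma_0(s)+\bigl(l_1(s)+l_2(s)\gamma_1(s)\bigr)\|x(s)\|$ coming from $\mathrm{(H_3)}$ and integrating, the two integrability conditions of $\mathrm{(H_7)}$ together with $l_0\in L^1$ yield $\int_t^\infty|L(s,x(s),\dot x(s))|\,ds\le C_0+C_1\|\xi\|$ with constants $C_0,C_1$ independent of $t$, $\xi$, and the trajectory. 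Hence the improper integral converges absolutely (so, in particular, is well-defined) along every admissible trajectory, and by $\mathrm{(H_1)}$ we obtain $V(t,\xi)\in\R$ with $|V(t,\xi)|\le C_0+C_1\|\xi\|$.

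\textbf{Step 2 (Lipschitz estimate).} Fix $t$ and $\xi,\xi'\in E$, choose $\varepsilon>0$ and $x(\cdot)\in\A_{(t,\xi)}$ with $\int_t^\infty L(s,x,\dot x)\,ds\le V(t,\xi)+\varepsilon$. By $\mathrm{(H_4)}$--$\mathrm{(H_6)}$ and a Filippov-type existence theorem (available since $E$ is separable, via measurable selection), there is $x'(\cdot)\in\A_{(t,\xi')}$ with $\|\dot x(s)-\dot x'(s)\|\le\gamma_1(s)\|x(s)-x'(s)\|$ a.e.; Gronwall then gives $\|x(s)-x'(s)\|\le\|\xi-\xi'\|\exp\bigl(\int_t^s\gamma_1(\tau)d\tau\bigr)$, hence $\|\dot x(s)-\dot x'(s)\|\le\gamma_1(s)\|\xi-\xi'\|\exp\bigl(\int_t^s\gamma_1(\tau)d\tau\bigr)$. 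Since both improper integrals converge absolutely (Step 1), $\mathrm{(H_3)}$ gives
$$
V(t,\xi')-V(t,\xi)-\varepsilon\le\int_t^\infty\bigl(l_1(s)\|x(s)-x'(s)\|+l_2(s)\|\dot x(s)-\dot x'(s)\|\bigr)ds\le k(t)\|\xi-\xi'\|,
$$
where $k(t):=\int_t^\infty\bigl(l_1(s)+l_2(s)\gamma_1(s)\bigr)\exp\bigl(\int_t^s\gamma_1(\tau)\,d\tau\bigr)ds$. Letting $\varepsilon\downarrow0$ and exchanging $\xi$ and $\xi'$ proves the Lipschitz bound of rank $k(t)$. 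Writing $k(t)=\exp\bigl(-\int_0^t\gamma_1\bigr)\int_t^\infty\bigl(l_1(s)+l_2(s)\gamma_1(s)\bigr)\exp\bigl(\int_0^s\gamma_1\bigr)ds$, the first condition of $\mathrm{(H_7)}$ makes the tail integral finite and absolutely continuous in $t$ while $\exp(-\int_0^t\gamma_1)$ is continuous and $\le1$; hence $k$ is continuous and $k(t)\to0$, and monotonicity follows from $[t_1,\infty)\supset[t_2,\infty)$ and $\exp(\int_{t_1}^s\gamma_1)\ge\exp(\int_{t_2}^s\gamma_1)$ for $t_1\le t_2\le s$.

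\textbf{Step 3 (lower semicontinuity).} Fix $(t,\xi)$. By Step 2, $|V(t',\xi')-V(t',\xi)|\le k(t')\|\xi'-\xi\|$ with $k$ bounded near $t$, so it suffices to bound $\liminf V(t',\xi')$ from below separately for $t'\ge t$ and for $t'<t$. For $t'\ge t$: fix any $z(\cdot)\in\A_{(t,\xi)}$ (nonempty by $\mathrm{(H_1)}$); concatenating $z|_{[t,t']}$ with an arbitrary arc of $\A_{(t',z(t'))}$ produces a member of $\A_{(t,\xi)}$, so the dynamic programming inequality $V(t,\xi)\le\int_t^{t'}L(s,z,\dot z)\,ds+V(t',z(t'))$ holds; combined with Step 2, $V(t,\xi)\le\int_t^{t'}L(s,z,\dot z)\,ds+V(t',\xi')+k(t')\|z(t')-\xi'\|$, and as $(t',\xi')\to(t,\xi)$ the first and third terms vanish (the integral by absolute convergence, the third because $z$ is continuous and $k$ bounded). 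For $t'<t$: pick $\varepsilon>0$ and $x'(\cdot)\in\A_{(t',\xi')}$ with $\int_{t'}^\infty L(s,x',\dot x')\,ds\le V(t',\xi')+\varepsilon$; restricting $x'$ to $[t,\infty)$ gives $x'|_{[t,\infty)}\in\A_{(t,x'(t))}$ with $\|x'(t)-\xi'\|\le\int_{t'}^t\|\dot x'\|\,ds\to0$ (by the Gronwall bound of Step 1, uniform for $t'$ near $t$), so $x'(t)\to\xi$, while $V(t,x'(t))\le V(t',\xi')+\varepsilon-\int_{t'}^t L(s,x',\dot x')\,ds$ with the last integral $\to0$ by $\mathrm{(H_3)}$ and the same bounds; Step 2 then yields $V(t,\xi)\le V(t,x'(t))+k(t)\|x'(t)-\xi\|\le V(t',\xi')+\varepsilon+o(1)$. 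In both regimes $V(t,\xi)\le\liminf_{(t',\xi')\to(t,\xi)}V(t',\xi')$, which is the claim.

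\textbf{Expected main obstacle.} The delicate point is lower semicontinuity as $t'\downarrow t$: near-optimal trajectories of $(\mathrm{P}_{t'})$ live only on $[t',\infty)$ and cannot simply be restricted or extended backward, so the argument must pass through the concatenation (dynamic programming) inequality rather than a direct comparison of trajectories. The other genuinely technical ingredient is the Filippov-type estimate of Step 2, which in the Banach-space setting rests on a measurable selection theorem and is the place where separability of $E$ enters.
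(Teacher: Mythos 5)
Your Steps 1 and 3 follow the paper's own argument almost verbatim (Gronwall plus $\mathrm{(H_3)}$, $\mathrm{(H_7)}$ to get an integrable majorant of $|L|$ along admissible trajectories; dynamic programming in both time directions combined with the Lipschitz estimate in the state for lower semicontinuity), and your $k(t)$ is exactly the paper's $k_1(t)+k_2(t)$. The genuine gap is in Step 2: you invoke a Filippov-type theorem producing a comparison trajectory $x'(\cdot)\in\A_{(t,\xi')}$ on the whole half-line with the \emph{exact} estimate $\|\dot x(s)-\dot x'(s)\|\le\gamma_1(s)\|x(s)-x'(s)\|$ a.e. In a separable Banach space with merely closed values of $\Gamma$ this is not available: the distance to $\Gamma(s,x'(s))$ need not be attained, and the successive-approximation/measurable-selection construction only yields the estimates up to an additive slack. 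The paper's Lemma B.1 (Frankowska 1990) is stated accordingly: on a compact interval $[t_0,t_1]$ only, with error terms of the form $\varepsilon(s-t_0)$ and $+\varepsilon$. This matters for your argument, because you compare the two improper integrals over $[t,\infty)$ directly; if you replace your exact estimate by the available $\varepsilon$-version with a constant slack on the half-line, the error contributes terms like $\int_t^\infty(s-t)\bigl(l_1(s)+l_2(s)\gamma_1(s)\bigr)\exp\bigl(\int_t^s\gamma_1(\tau)d\tau\bigr)ds$, whose finiteness is \emph{not} guaranteed by $\mathrm{(H_7)}$ (the weight there is $1+\int_0^s\gamma_0$, not $1+s$), so you cannot simply let $\varepsilon\downarrow0$ at the end.

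The repair is precisely what the paper does: apply the $\varepsilon$-lemma on $[t,T]$ to get $x^T(\cdot)$ with $x^T(t)=\xi'$, extend it beyond $T$ by an arbitrary arc of $\A_{(T,x^T(T))}$ (nonempty by $\mathrm{(H_1)}$), estimate the cost difference on $[t,T]$ exactly as you do, and then dispose of the endpoint terms $V(T,x^T(T))-V(T,x(T))$ using the uniform decay $\sup_{z(\cdot)\in\A_{(t,\xi')}}|V(T,z(T))|\to0$ as $T\to\infty$ — a fact your own Step 1 already delivers — letting $\varepsilon\downarrow0$ and $T\to\infty$. This recovers your Lipschitz bound with the same $k(t)$. (Alternatively one could prove a half-line Filippov lemma with a time-dependent, suitably integrable slack, but that is not the tool you cited.) Apart from this, your verification of continuity, monotonicity and decay of $k$ is fine and in fact more explicit than the paper's.
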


\noindent
The proof is deferred to Subsection \ref{apdx3}.

\begin{rem}
Since we impose the conditions which guarantee the integrability of the integrand on the set of admissible trajectories, the optimality criterion is unambiguous. When the integrability condition over the infinite horizon fails, there are several optimality criteria; see \citet{chl91,ss87,za06}. For the derivation of the necessary condition under (weak) overtaking optimality in the finite-dimensional setting, see \citet{ha74,ta82,ta84}. 
\end{rem}

\subsection{Subdifferentials of the Value Function}
In the following we always assume that optimal trajectories for $(\mathrm{P}_0)$ exist. To obtain an existence result in our framework, one needs standard convexity hypotheses. For the case with finite-dimensional control systems with an infinite horizon, see \citet{cf18}. For the case with reflexive, separable Banach space valued semilinear control systems with a finite horizon, see \citet{cf92}.  

Let us denote by $L^+_x(t,\bar{x},\bar{y};v)$ the upper partial directional derivative of $L(t,\cdot,\bar{y})$ at $\bar{x}\in E$ in the direction $v\in E$; $L^+_y(t,\bar{x},\bar{y};v)$ has an obvious meaning. Then $\partial^+_xL(t,\bar{x},\bar{y})$ is the Dini--Hadamard partial superdifferential of $L(t,\cdot,\bar{y})$ at $\bar{x}$; $\partial^+_yL(t,\bar{x},\bar{y})$ has a similar meaning. The Clarke partial directional derivatives $L^\circ_x(t,\bar{x},\bar{y};v)$ and  $L^\circ_y(t,\bar{x},\bar{y};v)$, and the Clarke partial subdifferentials $\partial^\circ_xL(t,\bar{x},\bar{y})$ and $\partial^\circ_yL(t,\bar{x},\bar{y})$ are defined in a similar way. Recall that $d_{\Gamma(t,x)}:E\to \R$ is the distance function from the set $\Gamma(t,x)$ and denote by $N_{\Gamma(t,x)}(y)\subset E^*$ the Clarke normal cone to $\Gamma(t,x)$ at $y\in \Gamma(t,x)$.  

We need another continuity assumption on $\Gamma$ that replaces $\mathrm{(H}_5)$: 

\begin{description}
\item[$\mathbf{(H_5')}$] $\Gamma(\cdot,x)$ is lower semicontinuous for every $x\in E$.  
\end{description}

Our results below concern the subdifferentiability of the value function. We neither impose any convexity assumptions, nor request the interiority of the optimal trajectory. This  improves results from \citet{bs79,bs82,blv96,rzs12,ta84}. 

\begin{thm}
\label{thm3}
Let $x_0(\cdot)\in \A_{(0,\xi)}$ be an optimal trajectory for $(\mathrm{P}_0)$. If $\mathrm{(H_1)}$--$\mathrm{(H}_4)$, $\mathrm{(H}_5')$, $\mathrm{(H}_6)$, and $\mathrm{(H}_7)$ hold, then:
\begin{enumerate}[\rm (i)]
\item $V^-_x(t,x_0(t);\dot{x}_0(t)-v)\le L(t,x_0(t),v)-L(t,x_0(t),\dot{x}_0(t))$ a.e.\ $t\in \R_+$ for every $v\in \Gamma(t,x_0(t))$; 
\item $-\partial^-_xV(t,x_0(t))\subset \partial^\circ_yL(t,x_0(t),\dot{x}_0(t))+N_{\Gamma(t,x_0(t))}(\dot{x}_0(t))$ a.e.\ $t\in \R_+$.
\end{enumerate}
Moreover, if $L(t,x_0(t),\cdot)$ is strictly differentiable at $\dot{x}_0(t)$, then: 
$$
-\partial^-_xV(t,x_0(t))\subset \nabla_yL(t,x_0(t),\dot{x}_0(t))+N_{\Gamma(t,x_0(t))}(\dot{x}_0(t)) \ \text{a.e.\ $t\in \R_+$}.
$$
Furthermore, if $V(t,\cdot)$ is regular at $x_0(t)$ and $\partial^-_xV(t,x_0(t))$ is a singleton, then $V(t,\cdot)$ is Gateaux differentiable at $x_0(t)$ with:
$$
-\nabla_xV(t,x_0(t))=\nabla_yL(t,x_0(t),\dot{x}_0(t))+q(t) \quad \text{a.e.\ $t\in \R_+$},
$$
where $q:\R_+\to E^*$ is a Borel measurable selector from $N_{\Gamma(\cdot,x_0(\cdot))}(\dot{x}_0(\cdot)):\R_+\rightsquigarrow E^*$ with respect to the weak$^*\!$ topology of $E^*$.  
\end{thm}

\noindent
For the proof, see Subsection \ref{apdx4}. 

\begin{exmp}
\label{exmp5}
Let $g_i:\R_+\times E\times E\to \R$, $i=1,2,\dots,m$, be Carath\'{e}odory functions and define the velocity multifunction $\Gamma:\R_+\times E\rightsquigarrow E$ by
$$
\Gamma(t,x):=\left\{ y\in E\mid g_i(t,x,y)\le 0,\,i=1,2,\dots,m \right\}. 
$$
Let $x_0(\cdot)\in W^{1,1}_{\mathrm{loc}}(\R_+,E)$ and
$$
I(t):=\{ i\in \{ 1,2,\dots,m \}\mid g_i(t,x_0(t),\dot{x}_0(t))=0 \}
$$
be the active constraint indices at $(t,x_0(t),\dot{x}_0(t))\in \R_+\times E\times E$. Assume that $g_i(t,x_0(t),\cdot)$ has the strict derivative at $\dot{x}_0(t)$ for each $i\in I(t)$ and the constraint qualification $0\not\in \mathrm{co}\{ \nabla_yg_i(t,x_0(t),\dot{x}_0(t))\mid i\in I(t) \}$ holds. It follows from Proposition \ref{exmp6} that: 
$$
N_{\Gamma(t,x_0(t))}(\dot{x}_0(t))=\left\{ \sum_{i\in I(t)}\lambda_i\nabla_yg_i(t,x_0(t),\dot{x}_0(t))\in E^* \mid \lambda_i\ge 0 \ \forall i\in I(t) \right\}. 
$$
Under the hypotheses of Theorem \ref{thm3}, we have
$$
-\nabla_xV(t,x_0(t))=\nabla_yL(t,x_0(t),\dot{x}_0(t))+\sum_{i\in I(t)}\lambda_i(t)\nabla_yg_i(t,x_0(t),\dot{x}_0(t))
$$
for some $\lambda_i(t)\ge 0$ with $i\in I(t)$. By the measurable selection theorem, the mapping $t\mapsto \lambda_i(t)$ can be chosen in a measurable way. Under the convexity hypothesis with the constraint qualification, \citet{rzs12} provided sufficient conditions for the differentiability of the value function with the finite-dimensional state constraint without the interiority conditions $\mathrm{(H_8)}$ below. 
\end{exmp}

\section{Euler--Lagrange Conditions and the Maximum Principle}
\subsection{Necessary Conditions under the Interiority Assumption} 
A function $p:\R_+\to E^*$ is said to be \textit{locally absolutely continuous} if its restriction to the bounded closed interval $[0,\tau]$ is absolutely continuous for every $\tau>0$, i.e., for every $\tau>0$ and $\varepsilon>0$ there exists $\delta>0$ such that $0\le t_1<\tau_1\le t_2<\tau_2<\dots \le t_n<\tau_n\le \tau$ and $\sum_{i=1}^n| t_i-\tau_i|<\delta$ imply $\sum_{i=1}^n\| p(t_i)-p(\tau_i) \|<\varepsilon$. A function $p(\cdot)$ is said to be \textit{weakly$^*\!$ differentiable} at $t>0$ if there exists $x^*\in E^*$ such that   
$$
\lim_{h\to 0}\left\langle \frac{p(t+h)-p(t)}{h},x \right\rangle=\langle x^*,x \rangle \quad\text{for every $x\in E$}. 
$$
Then vector $x^*$ is called the \textit{weak$^*\!$ derivative} of $p$ at $t$ and is denoted by $\dot{p}(t)$ with $d\langle p(t),x \rangle/dt=\langle \dot{p}(t),x \rangle$ for every $x\in E$. 

We impose a feasibility assumption on the perturbation around a specific optimal trajectory $x_0(\cdot)\in \A_{(0,\xi)}$ as follows.

\begin{description}
\item[$\mathbf{(H_8)}$] For every $T>0$ there exists $\eta>0$ such that 
$$
(x_0(t)+\eta B,\dot{x}_0(t))\subset \mathrm{gph}\,\Gamma(t,\cdot) \quad\text{a.e.\ $t\in [0,T]$}.
$$ 
\end{description}
This is equivalent to saying that for every $T>0$ there exists $\eta>0$ such that a.e.\ $t\in [0,T]$ we have $\dot{x}_0(t)\in \Gamma(t,x_0(t)+\eta v)$ for every $v\in B$. $\mathrm{(H_8)}$ is a weaker condition than the interiority condition  imposed in \citet{bs82}: 

\begin{description}
\item[$\mathbf{(H_8')}$] There exists $\eta>0$ such that 
$$
\left( x_0(t)+\eta B,\dot{x}_0(t) \right)\subset \mathrm{gph}\,\Gamma(t,\cdot) \quad\text{a.e.\ $t\in \R_+$}.
$$ 
\end{description}
Furthermore, $\mathrm{(H_8)}$ is a partial improvement of the interiority condition imposed in \citet{bs79}: 

\begin{description}
\item[$\mathbf{(H_8'')}$] There exist $T>0$ and $\eta>0$ such that 
$$
\left( x_0(t)+\eta B,\dot{x}_0(t)+\eta B \right)\subset \mathrm{gph}\,\Gamma(t,\cdot) \quad\text{a.e.\ $t\in [0,T]$}.
$$ 
\end{description}

Define the \textit{Hamiltonian} $H:\R_+\times E\times E^*\to \R\cup\{ +\infty \}$ by
$$
H(t,x,x^*):=\sup_{y\in \Gamma(t,x)}\left\{ \langle x^*,y \rangle-L(t,x,y) \right\}. 
$$

Now we are ready to present an extension of the Euler--Lagrange necessary condition and the maximum principle with the transversality condition at infinity. In the theorem below we use the notions of weak$^*\!$ scalar measurability and Gelfand integrals whose definitions are recalled in Appendix \ref{subsec1}.  

\begin{thm}
\label{thm4}
Suppose that $\mathrm{(H_1)}$--$\mathrm{(H}_4)$, $\mathrm{(H}_5')$, $\mathrm{(H}_6)$, $\mathrm{(H}_7)$, and $\mathrm{(H_8)}$ hold. If $\partial^-_xV(0,x_0(0))$ is nonempty and $\partial^+_x L(t,x_0(t),\dot{x}_0(t))$ is nonempty a.e.\ $t\in\R_+$, then for every $x^*\in \partial^-_xV(0,x_0(0))$ and weakly$^*\!$ scalarly measurable selector $f:\R_+\to E^*$ from the Dini--Hadamard superdifferential mapping $\partial^+_xL(\cdot,x_0(\cdot),\dot{x}_0(\cdot)):\R_+\rightsquigarrow E^*$, the locally absolutely continuous function $p:\R_+\to E^*$ defined by $p(t):=\int_0^tf(s)ds-x^*$ as a Gelfand integral satisfies:
\begin{enumerate}[\rm(i)]
\item $-p(t)\in \partial^-_xV(t,x_0(t))$ for every $t\in\R_+$; 
\item $p(t)\in \partial^\circ_yL(t,x_0(t),\dot{x}_0(t))+N_{\Gamma(t,x_0(t))}(\dot{x}_0(t))$ a.e.\ $t\in \R_+$;  
\item $\dot{p}(t)\in \partial^+_x L(t,x_0(t),\dot{x}_0(t))$ a.e.\ $t\in\R_+$; 
\item $H(t,x_0(t),p(t))=\langle p(t),\dot{x}_0(t) \rangle-L(t,x_0(t)),\dot{x}_0(t))$ a.e.\ $t\in \R_+$;  
\item $\displaystyle\lim_{t\to \infty}p(t)=0$,
\end{enumerate}
where $\dot{p}(t)$ denotes the weak$^*\!$ derivative of $p(\cdot)$ at $t\in \R_+$. In particular, if $\partial^-_xV(0,x_0(0))$ is nonempty, then $\partial^-_xV(t,x_0(t))$ is nonempty for every $t\in \R_+$. 
\end{thm}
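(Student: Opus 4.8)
The plan is to build the adjoint arc $p(\cdot)$ by a recursive/limiting construction along an exhaustion of $\R_+$ by compact intervals, using Theorem~\ref{thm3} as the source of the pointwise inclusions and the dynamic programming principle to propagate subgradients forward in time. First I would invoke the dynamic programming principle for $V$: for $0\le s\le t$ one has $V(s,x_0(s))=\int_s^t L(\tau,x_0(\tau),\dot x_0(\tau))d\tau+V(t,x_0(t))$ along the optimal trajectory, and $V(s,\cdot)$ is Lipschitz of rank $k(s)$ by Theorem~\ref{thm2}. The key mechanism is a "forward propagation" lemma: given $x^*\in\partial^-_xV(s,x_0(s))$, one produces, via the maximum principle machinery on the finite interval $[s,t]$ (which is the finite-horizon content of Theorem~\ref{thm3} applied on shifted time, together with hypothesis $\mathrm{(H_8)}$ ensuring the optimal trajectory is interior to $\mathrm{gph}\,\Gamma(s,\cdot)$ so that the adjoint equation closes), a value $y^*\in\partial^-_xV(t,x_0(t))$ and an absolutely continuous costate on $[s,t]$ joining $-x^*$ to $-y^*$ satisfying (ii)--(iv). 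Concatenating these over $[0,1],[1,2],\dots$ yields a locally absolutely continuous $p:\R_+\to E^*$ with $p(0)=-x^*$ and $-p(t)\in\partial^-_xV(t,x_0(t))$ for all $t$, giving (i)--(iv) at once; it also yields the final sentence, since nonemptiness of $\partial^-_xV(t,x_0(t))$ propagates along $p$.

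The heart of the argument is therefore the finite-interval costate construction. Here I would discretize time: on a mesh $0=s_0<s_1<\dots<s_N=t$ with mesh size $h$, use $\mathrm{(H_8)}$ and $\mathrm{(H_3)}$, $\mathrm{(H_6)}$ to get, at each node, a subgradient of $V(s_i,\cdot)$ at $x_0(s_i)$ together with an element of $\partial^\circ_yL+N_{\Gamma}$ and an element of $\partial^+_xL$ at $(x_0(s_i),\dot x_0(s_i))$; the adjoint difference quotient $\bigl(p(s_{i+1})-p(s_i)\bigr)/h$ is taken in $\partial^+_xL(s_i,x_0(s_i),\dot x_0(s_i))$, which is nonempty by hypothesis. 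Gronwall-type estimates using $\gamma_1$ and $l_1$ (exactly the quantities appearing in $\mathrm{(H_7)}$) bound the resulting piecewise-affine costates uniformly in the mesh; passing to the limit $h\to 0$ via a compactness/selection argument in $E^*$ (weak$^*$ sequential compactness of bounded sets plus Mazur's lemma to keep the inclusions, the Clarke sets being weak$^*$ compact convex and the normal cone weak$^*$ closed convex) produces the locally absolutely continuous $p$ with the weak$^*$ derivative in $\partial^+_xL$ a.e. The maximality condition (iv) follows because $p(t)\in\partial^\circ_yL(t,x_0(t),\dot x_0(t))+N_{\Gamma(t,x_0(t))}(\dot x_0(t))$ is, by the characterization of the Clarke normal cone via the support/distance function and the definition of $H$ in \eqref{ha}, precisely the first-order condition for $y=\dot x_0(t)$ to maximize $\langle p(t),y\rangle-L(t,x_0(t),y)$ over $\Gamma(t,x_0(t))$.

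For the transversality condition (v), $\lim_{t\to\infty}p(t)=0$, I would combine (i) with the quantitative decay in Theorem~\ref{thm2}: since $-p(t)\in\partial^-_xV(t,x_0(t))\subset\partial^\circ_xV(t,x_0(t))$ and $V(t,\cdot)$ is Lipschitz of rank $k(t)$, every element of the subdifferential has norm at most $k(t)$, so $\|p(t)\|\le k(t)\to 0$ as $t\to\infty$. This is the payoff of the integrability conditions $\mathrm{(H_7)}$: they are exactly what forces $k(t)\to 0$ in Theorem~\ref{thm2}, hence what delivers transversality without convexity or smoothness. The main obstacle I anticipate is the limit passage in the mesh construction while preserving all four inclusions simultaneously in the dual space $E^*$ (only weak$^*$ compactness is available, and weak$^*$ limits of a.e.-defined selections require a Gelfand-integral / measurable-selection argument of the kind developed in the appendices); controlling the adjoint equation on unbounded horizon is then a matter of the Gronwall estimate being uniform in $t$, which again traces back to the finiteness of the integral in $\mathrm{(H_7)}$.
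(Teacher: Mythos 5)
Your plan rests on a ``forward propagation lemma'' that is asserted but never proved, and this lemma is exactly the hard core of Theorem~\ref{thm4}. Theorem~\ref{thm3} cannot supply it: it only gives the pointwise upper estimate $-\partial^-_xV(t,x_0(t))\subset\partial^\circ_yL(t,x_0(t),\dot{x}_0(t))+N_{\Gamma(t,x_0(t))}(\dot{x}_0(t))$ and an inequality for directional derivatives of $V$; it does not construct a costate joining a subgradient of $V(s,\cdot)$ at $x_0(s)$ to a subgradient of $V(t,\cdot)$ at $x_0(t)$, nor does it assert that $\partial^-_xV(t,x_0(t))\ne\emptyset$ at later times --- that nonemptiness is part of the conclusion of Theorem~\ref{thm4}, so your scheme presupposes what must be proved (Theorem~\ref{thm0} gives only density, not nonemptiness at the specific points $x_0(s_i)$). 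Likewise, your mesh construction selects at each node an element of $\partial^+_xL$, an element of $\partial^\circ_yL+N_\Gamma$, and a putative subgradient of $V(s_i,\cdot)$, but provides no mechanism linking these choices: weak$^*$ compactness of bounded sets in $E^*$ and convexity of the subdifferential values can preserve the a.e.\ inclusion $\dot p(t)\in\partial^+_xL$ in the limit, but they cannot create condition (i). The paper avoids all of this with an explicit formula: it takes a Gelfand integrable selector $f(s)\in\partial^+_xL(s,x_0(s),\dot{x}_0(s))$ (Theorem~\ref{thm1}), sets $p(t)=\int_0^t f(s)\,ds-x^*$, and verifies $-p(t)\in\partial^-_xV(t,x_0(t))$ directly by perturbing the optimal trajectory by the constant shift $\theta v$ on $[0,t]$ (this is where $\mathrm{(H_8)}$ enters), applying the Bellman principle, Fatou's lemma and the dominated convergence theorem, and using $\langle x^*,v\rangle\le V^-_x(0,x_0(0);v)$; local absolute continuity follows from $\|f(s)\|\le l_1(s)$ and the weak$^*$ derivative $\dot p=f$ from Lemma~\ref{ko}. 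No discretization, concatenation, or limit passage is needed, and the final sentence of the theorem is then automatic.

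There is also a logical error in your derivation of (iv) from (ii): without convexity of $L(t,x_0(t),\cdot)$ and of $\Gamma(t,x_0(t))$ --- which Theorem~\ref{thm4} does not assume --- the inclusion $p(t)\in\partial^\circ_yL(t,x_0(t),\dot{x}_0(t))+N_{\Gamma(t,x_0(t))}(\dot{x}_0(t))$ is a stationarity-type necessary condition and does not imply the global maximality over $\Gamma(t,x_0(t))$ stated in (iv). The paper's order of deduction is the reverse: (iv) follows from Theorem~\ref{thm3}(i) combined with the already established condition (i), which give $\langle p(t),\dot{x}_0(t)\rangle-L(t,x_0(t),\dot{x}_0(t))\ge\langle p(t),y\rangle-L(t,x_0(t),y)$ for every $y\in\Gamma(t,x_0(t))$, and then (ii) is deduced from this maximality by the separation argument used in the proof of Theorem~\ref{thm3}(ii). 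Your argument for (v), namely $\|p(t)\|\le k(t)\to 0$ from the Lipschitz rank in Theorem~\ref{thm2}, is correct and is the same as the paper's.
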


\noindent
For the proof, see Subsection \ref{apdx2}. 

If $\mathrm{gph}\,\Gamma(t,\cdot)$ is convex on which $L(t,\cdot,\cdot)$ is convex for every $t\in \R_+$, then $H(t,\cdot,x^*)$ is concave on $E$ for every $x^*\in E^*$ and $H(t,x,\cdot)$ is convex on $E^*$ for every $x\in E$. It thus follows from the Fenchel duality that the Euler--Lagrange conditions (ii) and (iii) of Theorem \ref{thm4} are equivalent to the familiar Hamiltonian conditions $\dot{x}_0(t)\in \partial_pH(t,x_0(t),p(t))$ and $-\dot{p}(t)\in \partial_xH(t,x_0(t),p(t))$ respectively; see \citet[Theorem 6]{ro70}.

\begin{rem}
\label{rem1}
Theorem \ref{thm4} yields that ``singular'' points propagate forward along optimal trajectories, i.e., if the Dini--Hadamard subdifferential $\partial^-_xV(0,x_0(0))\ne \emptyset$ is not a singleton, then so does $\partial^-_xV(t,x_0(t))$ for every $t\in \R_+$. In the finite-dimensional control systems, this observation is done also in \citet{ta82} for fully convex variational problems and in \citet{cf18} for optimal control problems whose set of velocities are convex. Note that the nonemptiness of $\partial^-_xV(0,x_0(0))$ is an innocuous assumption because the set of points at which $\partial^-_xV(0,\cdot)$ is Dini--Hadamard subdifferentiable is dense in the separable Banach space $E$ by Theorem \ref{thm0}. 
\end{rem}

\begin{rem}
\label{rem2}
Even if the Dini--Hadamard subdifferential $\partial^-_xV(t,x_0(t))$ is a singleton, the strict derivative $\nabla_xV(t,x_0(t))$ might not exist because of the lack of convexity of the lower directional derivative $v\mapsto V^-_x(t,x_0(t);v)$. This observation makes a sharp contrast with the case where the Clarke subdifferential $\partial^\circ_xV(t,x_0(t))$ is a singleton, in which case $\partial^\circ_xV(t,x_0(t))$ coincides with $\nabla_xV(t,x_0(t))$; see for detail the proof of \citet[Proposition 2.2.4]{cl83}.
\end{rem}

\subsection{Necessary Conditions without the Interiority Assumption}
Hypothesis $\mathrm{(H_8)}$ in Theorem \ref{thm4} is stringent, mostly because the velocity multifunction $\Gamma$ is too general. Hence, a ``structural assumption'' on the optimal trajectory $x_0(\cdot)$ compensates this generality. If instead some ``structural'' assumptions are imposed on $\Gamma$, then hypothesis $\mathrm{(H_8)}$ can be omitted. To illustrate  this observation, we consider standard optimal control problems. 

Let $X$ be a complete separable metric space, $f:\R_+\times E\times X\to E$ be a velocity function, and $U:\R_+\rightsquigarrow X$ be a control multifunction. Denote by $\M(\R_+,X)$ the space of measurable functions on $\R_+$ with values in $X$. Define the integrand $\tilde{L}:\R_+\times E\times X \to \R$ by $\tilde{L}(t,x,u):=L(t,x,f(t,x,u))$.  

The optimal control problem under consideration is as follows:  

\begin{equation}
\label{oc}
\begin{aligned}
& \inf_{\begin{subarray}{l}x(\cdot)\in W^{1,1}_\mathrm{loc}(\R_+,E) \\ u(\cdot)\in \M(\R_+,X) \end{subarray}}\int_0^\infty \tilde{L}(t,x(t),u(t))dt \\
& \text{s.t. } u(t)\in U(t) \text{ a.e.\ $t\in \R_+$}, \\
& \hspace{0.70cm} \dot{x}(t)=f(t,x(t),u(t)) \ \text{a.e.\ $t\in \R_+$}, \quad x(0)=\xi. 
\end{aligned}
\tag{$\tilde{\mathrm{P}}_0$}
\end{equation}
The Hamiltonian for problem \eqref{oc} is given by:
$$
H(t,x,x^*)=\sup_{u\in U(t)}\left\{ \langle x^*,f(t,x,u) \rangle-\tilde{L}(t,x,u) \right\}. 
$$
The velocity multifunction is defined by $\Gamma(t,x):=f(t,x,U(t))$. Now impose ``usual'' assumptions on $f$ and $U$ in order that $\Gamma$ satisfies $\mathrm{(H_1)}$--$\mathrm{(H}_4)$, $\mathrm{(H}_5')$, $\mathrm{(H}_6)$, and $\mathrm{(H}_7)$. Note that $\partial^+_x\tilde{L}(t,x,u)$ is nonempty at $(t,x,u)\in \R_+\times E\times X$ whenever so is $\partial^+_{x,y}L(t,x,f(t,x,u))$ and $f(t,\cdot,u)$ is Gateaux differentiable at $x$. 

Denote by $\L(E)$ the space of bounded linear operators on $E$. 

The following reasonable hypothesis is a ``structural assumption'' on $f$ that dispenses with $\mathrm{(H_8)}$. 

\begin{description}
\item[$\mathbf{(H_9)}$] 
\begin{enumerate}[(i)]
\item $f$ is a Carath\'{e}odory function, i.e., $f(\cdot,x,u)$ is measurable for every $(x,u)\in E\times X$ and $f(t,\cdot,\cdot)$ is continuous for every $t\in \R_+$. 
\item For every $R>0$ and $T>0$ there exists an integrable function $k:[0,T]\to \R$ such that: 
\begin{enumerate}
\item $\| f(t,x,u) \|\le k(t)$ for every $t\in [0,T]$, $x\in RB$, and $u\in U(t)$;
\item $f(t,\cdot,u)$ is Lipschitz of rank $k(t)$ on $RB$ for every $t\in [0,T]$ and $u\in U(t)$.
\end{enumerate}
\item $f(t,\cdot,u)$ is Fr\'{e}chet differentiable on $E$ for every $(t,u)\in \R_+\times X$ and the mapping $(t,x,u)\mapsto \nabla_xf(t,x,u)$ is continuous in the uniform operator topology of $\L(E)$. 
\end{enumerate}
\end{description} 
Conditions $\mathrm{(H_9)}$-(i), (ii) guarantee the existence of solutions of the integral equation 
$$
x(t)=\int_0^tf(s,x(s),u(s))ds+\xi \quad \text{for every $t\in \R_+$}
$$ 
for any control $u(\cdot)\in \M(\R_+,X)$, where the locally absolutely continuous function $x(\cdot):\R_+\to E$ is a unique mild solution to the ordinary differential equation (ODE) in \eqref{oc} (see \citet[Theorem 5.5.1]{fa99}), which has the strong derivative $\dot{x}(t)$ a.e.\ $t\in \R_+$ in view of the separability of $E$ and the Lebesgue differentiation theorem. 

Let $(x_0(\cdot),u_0(\cdot))\in W^{1,1}_\mathrm{loc}(\R_+,E)\times \M(\R_+,X)$ be an optimal trajectory-control pair for optimal control problem \eqref{oc}. Denote by $\nabla_xf(s,x_0(s),u_0(s))^*$ in $\L(E^*)$ the adjoint operator of $\nabla_xf(s,x_0(s),u_0(s))$ in $\L(E)$. 

\begin{thm}
\label{thm5}
Suppose that $\mathrm{(H_1)}$--$\mathrm{(H}_4)$, $\mathrm{(H}_5')$, $\mathrm{(H}_6)$, $\mathrm{(H}_7)$, and $\mathrm{(H}_9)$ hold with $\Gamma(t,x)=f(t,x,U(t))$. If $\partial^-_xV(0,x_0(0))$ is nonempty and the Dini--Hadamard superdifferential mapping $\partial^+_x\tilde{L}(\cdot,x_0(\cdot),u_0(\cdot)):\R_+\rightsquigarrow E^*$ admits a locally Bochner integrable selector, then there exists a locally absolutely continuous function $p:\R_+\to E^*$ such that:
\begin{enumerate}[\rm(i)]
\item $-p(t)\in \partial^-_xV(t,x_0(t))$ for every $t\in\R_+$; 
\item $p(t)\in \partial^\circ_yL(t,x_0(t),f(t,x_0(t),u_0(t)))+N_{\Gamma(t,x_0(t))}(f(t,x_0(t),u_0(t)))$ a.e.\ $t\in \R_+$;  
\item $-\dot{p}(t)\in \nabla_xf(t,x_0(t),u_0(t))^*p(t)-\partial^+_x \tilde{L}(t,x_0(t),u_0(t))$ a.e.\ $t\in \R_+$;
\item $H(t,x_0(t),p(t))=\langle p(t),f(t,x_0(t),u_0(t)) \rangle-\tilde{L}(t,x_0(t),u_0(t))$ a.e.\ $t\in \R_+$;  
\item $\displaystyle\lim_{t\to \infty}p(t)=0$, 
\end{enumerate}
where $\dot{p}(t)$ denotes the strong derivative of $p(\cdot)$ at $t\in \R_+$. In particular, if $\partial^-_xV(0,x_0(0))$ is nonempty, then $\partial^-_xV(t,x_0(t))$ is nonempty for every $t\in \R_+$. 
\end{thm}

\noindent
The proof is provided in Subsection \ref{apdx2}.

\begin{rem}
The existence of locally Bochner integrable selectors from the Dini--Hadamard superdifferential mapping $t\rightsquigarrow \partial^+_x\tilde{L}(t,x_0(t),u_0(t))$ follows from $\mathrm{(H_3)}$ and $\mathrm{(H_9)}$ whenever $E^*$ is separable in the dual norm. For the case with nonseparable $E^*$, the Fr\'{e}chet differentiability of the integrand $L(t,\cdot,\cdot)$ on $E\times E$ and the continuity of $(t,x,y)\mapsto (\nabla_xL(t,x,y),\nabla_yL(t,x,y))$ in the dual norm of $E^*\times E^*$ guarantee the local Bochner integrability of $t\mapsto \nabla_x\tilde{L}(t,x_0(t),u_0(t))$ in $E^*$ under $\mathrm{(H_3)}$ and $\mathrm{(H_9)}$. If $\partial^+_{x,y}L(t,x_0(t),\dot{x}_0(t))$ is nonempty, then take any $(p,q)\in \partial^+_{x,y}L(t,x_0(t),\dot{x}_0(t))$ and observe that for every $v\in E$ and $u\in U(t)$, we have:
\begin{align*}
\tilde{L}^+_x(t,x,u;v)
& \le L^+_{x,y}(t,x,f(t,x,u);v,\nabla_xf(t,x,u)v) \\
& \le \langle p,v\rangle+\langle q,\nabla_xf(t,x,u)v \rangle=\langle p+\nabla_xf(t,x,u)^*q,v \rangle.
\end{align*}
Thus, instead of using the function $g:\R_+\to E^*$ in the proof of Theorem \ref{thm5} below, we could use as well any locally Bochner integrable selector $(\alpha(t),\beta(t))\in \partial^+_{x,y}L(t,x_0(t),\dot{x}_0(t))$ and write the adjoint equation involving $g(t)=\alpha(t)+\nabla_xf(t,x_0(t),u_0(t))^*\beta(t)$.  
\end{rem}

\section{An Application: Spatial Ramsey Growth Models}
\subsection{Ramsey Meets Hotelling}
Consider the spatial Ramsey growth model with a general reduced form explored in \citet{bcf13,bcz09,bxy14,br04,czb08} in the specific form. Let $I=[0,1]$ be the unit interval such that the endpoints $0,1\in I$ are identified. Then $I$ is homeomorphic to the unit circle in which a spatial parameter $\theta\in I$ is a location of agents along the lines of \citet{ho29}. Let $W:\R_+\times \R_+\times I\to \R$ be a function such that $W(\cdot,\cdot,\theta)$ is an instantaneous utility function at location $\theta\in I$ satisfying $W(a,b,0)=W(a,b,1)$ for every $(a,b)\in \R_+\times \R_+$, where $a$ denotes a capital stock and $b$ a net investment. Let $F:\R_+\times \R_+\times I\to \R_+$ be a function such that $F(\cdot,\cdot,\theta)$ is a production function at $\theta$ satisfying $F(a,c,0)=F(a,c,1)$ for every $(a,c)\in \R_+\times \R_+$, where $c$ denotes a consumption and output $F(a,c,\theta)$ is a net investment at $\theta$ for every $(a,c)$. Let $r>0$ be a discount rate. For simplicity, we assume no depreciation of capital stock.  

Let $x:\R_+\times I\to \R_+$ be a capital stock trajectory in which $x(t,\theta)$ is a current capital stock, $\partial x(t,\theta)/\partial t$ is a current capital accumulation, $u:\R_+\times I\to \R_+$ denotes a consumption trajectory in which $u(t,\theta)$ is a current consumption, and $\tau:\R_+\times I\to \R$ denotes a net transfer trajectory in which $\tau(t,\theta)$ is a current net transfer, respectively at period $t\in \R_+$ and location $\theta\in I$. The capital accumulation process is described by 
$$
\frac{\partial x(t,\theta)}{\partial t}=F(x(t,\theta),u(t,\theta),\theta)+\tau(t,\theta)
$$ 
for a.e.\ $t\in \R_+$ and for every $\theta\in I$. If $\tau(t,\theta)\equiv 0$, then the model describes an autarkic economy in which no capital movement occurs among locations; see \citet{br04}. For the case where $\tau(t,\theta)$ is a parabolic term, see \citet{bcf13,bcz09,br04,czb08}. The choice of function spaces depends upon the specification of a transfer term. Following the forementioned works, we focus here on the autarkic case in which capital stock and consumption change smoothly in locations. 

Let $C^2(I)$ be the space of twice continuously differentiable functions on $I$ with their values equal at the end points $\theta=0,1$, endowed with the $C^2$-norm $\| x \|_{C^2(I)}:=\sup_{\theta\in I}\{ |x(\theta)|+|x'(\theta)|+|x''(\theta)| \}$, which makes $C^2(I)$ a separable Banach space. Denote by $C^2_+(I)$ the positive cone of $C^2(I)$ consisting of all nonnegative functions in $C^2(I)$. The problem under investigation is:
\begin{equation}
\label{Q}
\begin{aligned}
& \max\int_0^\infty \int_I e^{-rt}W\left( x(t,\theta),\frac{\partial x(t,\theta)}{\partial t},\theta \right)d\theta dt  \\
& \text{\,s.t. } \frac{\partial x(t,\theta)}{\partial t}=F(x(t,\theta),u(t,\theta),\theta), \quad u(t)\in U(t) \\
& \hspace{4.5cm} \text{ a.e.\ $t\in \R_+$ for every $\theta\in I$}, \\
& \hspace{0.8cm} x(t,0)=x(t,1), \ u(t,0)=u(t,1) \ \text{a.e.\ $t\in \R_+$}, \\
& \hspace{0.8cm} x(0,\theta)=\xi(\theta) \ \text{for every $\theta\in I$}.
\end{aligned}
\tag{$\mathrm{Q}_0$}
\end{equation}
Here, the control set $U(t)$ is a subset of $C^2_+(I)$ for every $t\in \R_+$. The maximization is taken over all \textit{nonnegative} functions $x(\cdot,\cdot)$ in the function space such that $x(\cdot,\theta)$ is a.e.\ differentiable on $\R_+$ for every $\theta \in I$ with $x(t,\cdot)$ and $\partial x(t,\cdot)/\partial t$ belonging to $C^2(I)$ a.e.\ $t\in \R_+$, and over all functions $u(\cdot,\cdot)$ such that $u(t,\cdot)\in U(t)$ and $u(\cdot,\theta)$ is measurable on $\R_+$ for every $\theta\in I$ satisfying the parametrized ODE above, where the initial condition at location $\theta$ is given by $\xi(\theta)$ with $\xi\in C^2_+(I)$. 

Throughout this section, we assume the following.  

\begin{assmp}
\label{assmp}
\begin{enumerate}[(i)]
\item $W$ has an extension to $\R\times \R\times I$ (which we do not relabel) such that $W(a,b,\cdot)$ is measurable on $I$ for every $(a,b)\in \R\times \R$, $W(\cdot,\cdot,\theta)$ is continuously differentiable on $\R\times \R$ for every $\theta\in I$, and its partial derivatives are bounded uniformly in $(a,b,\theta)\in \R\times \R \times I$. 
\item $F$ has a thrice continuously differentiable extension to $\R\times \R\times I$ (which we do not relabel) such that every partial derivative of any order less or equal $3$ is bounded uniformly in $(a,c,\theta)\in \R\times \R \times I$.
\item There exists a bounded closed subset $X$ of $C^2_+(I)$ such that $U(t)\subset X$ for every $t\in \R_+$. 
\end{enumerate}
\end{assmp}

Define the integrands $L:\R_+\times C^2(I)\times C^2(I)\to \R$ by
$$
L(t,x(\cdot),y(\cdot)):=-e^{-rt}\int_IW(x(\theta),y(\theta),\theta)d\theta
$$
and $\tilde{L}:\R_+\times C^2(I)\times X\to \R$ by
$$
\tilde{L}(t,x(\cdot),u(\cdot)):=-e^{-rt}\int_IW(x(\theta),F(x(\theta),u(\theta),\theta),\theta)d\theta
$$
respectively. Consider the velocity function $f:C^2(I)\times X\to C^2(I)$ defined by 
$$
f(x(\cdot),u(\cdot)):=F(x(\cdot),u(\cdot),\cdot). 
$$
Here, $f(\cdot,u(\cdot))$ is Fr\`{e}chet differentiable on $C^2(I)$ and its Fr\`{e}chet derivative $\nabla_xf(x(\cdot),u(\cdot))\in \L(C^2(I))$ can be calculated as
$$
\nabla_xf(x(\cdot),u(\cdot))v(\cdot)=\frac{\partial F(x(\cdot),u(\cdot),\cdot)}{\partial a}v(\cdot)
$$ 
for every $v(\cdot)\in C^2(I)$. By construction, it is evident that 
$$
\tilde{L}(t,x(\cdot),u(\cdot))=L(t,x(\cdot),f(x(\cdot),u(\cdot)))
$$ 
for every $(x(\cdot),u(\cdot))\in C^2(I)\times X$. Define the velocity multifunction $\Gamma:\R_+\times C^2(I)\rightsquigarrow C^2(I)$ by $\Gamma(t,x(\cdot)):=f(x(\cdot),U(t))$. 

We then convert the problem \eqref{Q} into the minimization one of the form \eqref{oc} in the setting with $E=C^2(I)$ and $X\subset C^2_+(I)$. If $(x_0(\cdot),u_0(\cdot))\in W^{1,1}_\mathrm{loc}(\R_+,C^2(I))\times \M(\R_+,X)$ is an optimal trajectory-control pair of \eqref{oc}, then $(x_0(\cdot),u_0(\cdot))$ is a solution to the associated problem \eqref{Q}, and vice versa because any admissible trajectory of \eqref{oc} stays in the nonnegative orthant. It is easy to see that Assumption \ref{assmp} guarantees hypotheses in Theorem \ref{thm5}. In particular, Hypothesis $\mathrm{(H_7)}$ is satisfied whenever $r>0$ is large enough.

\subsection{Necessary Conditions for Optimality}
Let $C(I)$ be the space of continuous functions on $I$ endowed with the sup norm and $\mathit{ca}(I)$ be the space of signed Borel measures on $I$. Since each $x\in C^2(I)$ is represented by
$$
x(\theta)=x(0)+x'(0)\theta+\int_0^\theta\int_0^\sigma x''(\omega)d\omega d\sigma \quad\text{for every $\theta\in I$}
$$
with $x(0),x'(0)\in \R$ and $x''\in C(I)$, the Banach space $C^2(I)$ is identified with the direct sum $\R\oplus \R \oplus C(I)$. Hence, $C^2(I)^*=\R\oplus \R \oplus \mathit{ca}(I)$ and each $x^*\in C^2(I)^*$ has the form
$$
\langle x^*,x \rangle=\alpha_0x(0)+\alpha_1x'(0)+\int_Ix''(\theta)d\mu \quad\text{for every $x\in C^2(I)$}
$$
for some constants $\alpha_0,\alpha_1\in \R$ and a signed Borel measure $\mu\in \mathit{ca}(I)$; see \citet[Exercise IV.13.36]{ds58}. Hence, the adjoint variables in the spatial Ramsey growth model take values in $\R\oplus \R \oplus \mathit{ca}(I)$. 

Let $(x_0(\cdot),u_0(\cdot))\in W^{1,1}(\R_+,C^2(I))\times \M(\R_+,X)$ be an optimal trajecto\-ry-\hspace{0pt}control pair of \eqref{oc} and $V:\R_+\times C^2(I)\to \R$ be the value function. A direct calculation shows that for every $v\in C^2(I)$ we have
$$
\langle \nabla_xL(t,x_0(t),\dot{x}_0(t)),v \rangle=-e^{-rt}\int_I\frac{\partial W(x_0(t,\theta),\dot{x}_0(t,\theta),\theta)}{\partial a}v(\theta)d\theta.
$$
To evaluate the above integral, define $\alpha(\theta):=\partial W(x_0(t,\theta),\dot{x}_0(t,\theta),\theta)/\partial a$,  
$A(\theta):=-\int_\theta^1\alpha(\omega)d\omega$, and $B(\theta):=-\int_\theta^1 A(\omega)d\omega$. The double use of integration by parts yields  
\begin{align*}
& \int_I\alpha(\theta)v(\theta)d\theta=\left[ A(\theta)v(\theta) \right ]_0^1-\int_IA(\theta)v'(\theta)d\theta \\ {}={}
& -A(0)v(0)+B(0)v'(0)+\int_IB(\theta)v''(\theta)d\theta \\ {}={}
& \int_I\frac{\partial W(x_0(t,\theta),\dot{x}_0(t,\theta),\theta)}{\partial a}d\theta v(0) \\ 
& \qquad +\int_I\left[ \int_\theta^1\frac{\partial W(x_0(t,\theta),\dot{x}_0(t,\omega),\omega)}{\partial a}d\omega \right]d\theta v'(0) \\
& \qquad +\int_I\left[ \int_\theta^1\int_\sigma^1\frac{\partial W(x_0(t,\theta),\dot{x}_0(t,\omega),\omega)}{\partial a}d\omega d\sigma \right]v''(\theta)d\theta. 
\end{align*}
Henceforth, we obtain
$$
\langle \nabla_xL(t,x_0(t),\dot{x}_0(t)),v \rangle=a_0(t)v(0)+a_1(t)v'(0)+\int_Iv''(\theta)d\mu(t)
$$
with
\begin{align*}
& a_0(t)=-e^{-rt}\int_I\frac{\partial W(x_0(t,\theta),\dot{x}_0(t,\theta),\theta)}{\partial a}d\theta, \\
& a_1(t)=-e^{-rt}\int_I\int_\theta^1\frac{\partial W(x_0(t,\theta),\dot{x}_0(t,\omega),\omega)}{\partial a}d\omega d\theta, \\
& \frac{d\mu_0(t)}{d\theta}(\theta)=-e^{-rt}\int_\theta^1\int_\sigma^1 \frac{\partial W(x_0(t,\theta),\dot{x}_0(t,\omega),\omega)}{\partial a}d\omega d\sigma,
\end{align*}
where $\mu_0(t)\in \mathit{ca}(I)$ is given by its Radon--Nikodym derivative $d\mu_0(t)/d\theta$. Hence, $\nabla_xL(t,x_0(t),\dot{x}_0(t))=a_0(t)\oplus a_1(t)\oplus \mu_0(t)\in \R\oplus \R \oplus \mathit{ca}(I)$. Similarly, replacing $\alpha$ by $\alpha(\theta):=\partial W(x_0(t,\theta),\dot{x}_0(t,\theta),\theta)/\partial b$ in the above argument with integration by parts yields 
\begin{align*}
\langle \nabla_yL(t,x_0(t),\dot{x}_0(t)),v \rangle
& =-e^{-rt}\int_I\frac{\partial W(x_0(t,\theta),\dot{x}_0(t,\theta),\theta)}{\partial b}v(\theta)d\theta \\
& =b_0(t)v(0)+b_1(t)v'(0)+\int_Iv''(\theta)d\nu_0(t)
\end{align*}
with
\begin{align*}
& b_0(t)=-e^{-rt}\int_I\frac{\partial W(x_0(t,\theta),\dot{x}_0(t,\theta),\theta)}{\partial b}d\theta, \\
& b_1(t)=-e^{-rt}\int_I\int_\theta^1\frac{\partial W(x_0(t,\theta),\dot{x}_0(t,\omega),\omega)}{\partial b}d\omega d\theta, \\
& \frac{d\nu_0(t)}{d\theta}(\theta)=-e^{-rt}\int_\theta^1\int_\sigma^1\frac{\partial W(x_0(t,\theta),\dot{x}_0(t,\omega),\omega)}{\partial b}d\omega d\sigma. 
\end{align*}
Hence, $\nabla_yL(t,x_0(t),\dot{x}_0(t))=b_0(t)\oplus b_1(t)\oplus \nu_0(t)\in \R\oplus \R \oplus \mathit{ca}(I)$. 

If $\xi\in C^2_+(I)$ is a point such that $\partial^-_xV(0,\xi)$ is nonempty (see Remark \ref{rem1}), then in view of Theorem \ref{thm5} there exists a locally absolutely continuous function $p:\R_+\to C^2(I)^*$ such that (i) $-p(t)\in \partial^-_xV(t,x_0(t))$ for every $t\in \R_+$; (ii) $p(t)=\nabla_yL(t,x_0(t),\dot{x}_0(t))+q(t)$ a.e.\ $t\in \R_+$, where $q:\R_+\to C^2(I)^*$ is a Borel measurable selector from $t\rightsquigarrow N_{\Gamma(t,x_0(t))}(\dot{x}_0(t))$ with respect to the weak$^*\!$ topology of $C^2(I)^*$; (iii) $-\dot{p}(t)=\nabla_xf(x_0(t),u_0(t))^*p(t)-\nabla_x\tilde{L}(t,x_0(t),\dot{x}_0(t))$ a.e.\ $t\in \R_+$; (iv) $H(t,x_0(t),p(t))=\langle p(t),f(x_0(t),u_0(t)) \rangle-\tilde{L}(t,x_0(t),u_0(t))$ a.e.\ $t\in \R_+$; (v) $\lim_{t\to \infty}p(t)=0$. Since $q$ is represented as $q(t)=\beta_0(t)\oplus\beta_1(t)\oplus \nu_1(t)\in \R\oplus \R\oplus \mathit{ca}(I)$, condition (ii) can be written as $p(t)=(b_0(t)+\beta_0(t))\oplus(b_1(t)+\beta_1(t))\oplus(\nu_0(t)+\nu_1(t))$ a.e.\ $t\in \R_+$. Since 
\begin{align*}
& \nabla_x\tilde{L}(t,x_0(t),u_0(t)) \\ {}={}
& \nabla_xL(t,x_0(t),\dot{x}_0(t))+\nabla_yL(t,x_0(t),\dot{x}_0(t))\nabla_xf(x_0(t),u_0(t)) \\ {}={}
& \nabla_xL(t,x_0(t),\dot{x}_0(t))+\nabla_xf(x_0(t),u_0(t))^*(p(t)-q(t)), 
\end{align*}
condition (iii) yields $-\dot{p}(t)=-(a_0(t)\oplus a_1(t)\oplus \mu_0(t))+q(t)\nabla_xf(x_0(t),u_0(t))$, and hence, $\dot{p}(t)=(a_0(t)-\alpha_0(t))\oplus (a_1(t)-\alpha_1(t))\oplus (\mu_0(t)-\mu_1(t))$ with setting $q(t)\nabla_xf(x_0(t),u_0(t))=\alpha_0(t)\oplus\alpha_1(t)\oplus \mu_1(t)\in \R\oplus \R\oplus \mathit{ca}(I)$. 

Define the new adjoint variable by $\pi(t):=e^{rt}p(t)$ and denote it by $\pi(t)=\pi_0(t)\oplus \pi_1(t)\oplus \lambda(t)\in \R\oplus \R\oplus \mathit{ca}(I)$. The transversality condition at infinity can be written as $\lim_{t\to \infty}e^{-rt}\pi(t)=0$. It follows from condition (iii) that 
\begin{equation}
\label{eq5}
\dot{\pi}_0(t)=r\pi_0(t)-\int_I\frac{\partial W(x_0(t,\theta),\dot{x}_0(t,\theta),\theta)}{\partial a}d\theta-e^{rt}\alpha_0(t)
\end{equation}
\begin{equation}
\dot{\pi}_1(t)=r\pi_1(t)-\int_I\int_\theta^1\frac{\partial W(x_0(t,\omega),\dot{x}_0(t,\omega),\omega)}{\partial a}d\omega d\theta-e^{rt}\alpha_1(t)
\end{equation}
\begin{equation}
\begin{aligned}
\label{eq8}
\dot{\lambda}(t)(A)
& =r\lambda(t)(A)-\int_A\int_\theta^1\int_\sigma^1\frac{\partial W(x_0(t,\omega),\dot{x}_0(t,\omega),\omega)}{\partial a}d\omega d\sigma d\theta \\
& \qquad -e^{rt}\mu_1(t)(A)
\end{aligned}
\end{equation}
a.e.\ $t\in \R_+$  for every $A\in \L$. Hence, any stationary point $(\bar{x},\bar{\pi})\in C^2(I)\times C^2(I)^*$ of the dynamical system corresponding to $\dot{x}_0(t)\equiv 0$ and $\dot{\pi}(t)\equiv 0$ with the adjoint equations \eqref{eq5}--\eqref{eq8} is determined by the following conditions:  
\begin{align*}
& F(\bar{x}(\cdot),\bar{u}(\cdot),\cdot)=0, \\
& \bar{\pi}_0=r^{-1}\left( \int_I\frac{\partial W(\bar{x}(\theta),0,\theta)}{\partial a}d\theta+\alpha_0(0) \right), \\
& \bar{\pi}_1=r^{-1}\left( \int_I\int_\theta^1\frac{\partial W(\bar{x}(\omega),0,\omega)}{\partial a}d\omega d\theta+\alpha_1(0)\right), \\
& \bar{\lambda}(A)=r^{-1}\left( \int_A\int_\theta^1\int_\sigma^1\frac{\partial W(\bar{x}(\omega),0,\omega)}{\partial a}d\omega d\sigma d\theta+\mu_1(0)(A) \right) \\
& \hspace{7.5cm} \text{for every $A\in \L$}
\end{align*}
along with $(\alpha_0(t),\alpha_1(t),\mu_1(t))=(\alpha_0(0),\alpha_1(0),\mu_1(0))e^{-rt}$.

\appendix
\small{
\section{Appendix I}
\subsection{Proof of Proposition \ref{exmp6}}
\label{apdx0}
Since $T_C(\bar{x})\subset K_C(\bar{x})$, if $v\in T_C(\bar{x})$, then there exists a sequence $\{ \theta_n \}_{n\in \N}$ of positive real numbers with $\theta_n\downarrow 0$ and a sequence $\{ v_n \}_{n\in \N}$ in $E$ with $v_n\to v$ such that $\bar{x}+\theta_nv_n\in C$ for each $n\in \N$. Thus, $\varphi_i(\bar{x}+\theta_nv_n)\le 0$ for each $i\in I(\bar{x})$. Since $\varphi_i$ is strictly differentiable (and hence Gateaux differentiable) at $\bar{x}$, we have 
$$
\langle \nabla \varphi_i(\bar{x}),v \rangle=\lim_{n\to \infty}\frac{\varphi_i(\bar{x}+\theta_nv)-\varphi_i(\bar{x})}{\theta_n}\le \lim_{n\to \infty}\left( \frac{\varphi_i(\bar{x}+\theta_nv_n)}{\theta_n}+\alpha_i\| v_n -v \| \right) \le 0
$$
for each $i\in I(\bar{x})$, where $\alpha_i$ is a Lipschitz modulus of $\varphi_i$. Hence, $T_C(\bar{x})\subset \{ v\in E\mid  \langle \nabla \varphi_i(\bar{x}),v \rangle\le 0 \ \forall i\in I(\bar{x}) \}$. Since $0\not\in \mathrm{co}\,\{ \nabla \varphi_i(\bar{x})\mid i\in I(\bar{x}) \}$, by the separation theorem, there exists $\bar{v}\in E$ such that $\sup_{i\in I(\bar{x})}\langle \nabla \varphi_i(\bar{x}),\bar{v} \rangle=:-\varepsilon<0$. Let $\{ x_n \}_{n\in \N}$ be a sequence in $C$ with $x_n\to \bar{x}$ and $\{ \theta_n \}_{n\in \N}$ be a sequence of positive real numbers with $\theta_n\downarrow 0$. Then $\varphi_i(x_n)\le 0$ and 
\begin{align*}
\limsup_{n\to \infty}\frac{\varphi_i(x_n+\theta_n \bar{v})}{\theta_n}\le \limsup_{n\to \infty}\frac{\varphi_i(x_n+\theta_n \bar{v})-\varphi_i(x_n)}{\theta_n}=\langle \nabla\varphi_i(\bar{x}),\bar{v} \rangle<0.
\end{align*}
Thus, for every $n$ sufficiently large, we have $\varphi_i(x_n+\theta_n\bar{v})<0$ for each $i\in I(\bar{x})$. Consequently, $x_n+\theta_n \bar{v}\in C$ for every $n$ sufficiently large, and hence, $\bar{v}\in T_C(\bar{x})$. Let $v\in E$ be such that $\langle \nabla\varphi_i(\bar{x}),v\rangle\le 0$ for each $i\in I(\bar{x})$. For $\alpha\in (0,1)$ define $v_\alpha:=\alpha \bar{v}+(1-\alpha)v$. Then $\langle \nabla \varphi_i(\bar{x}),v_\alpha \rangle<0$  for each $i\in I(\bar{x})$. Similar to the case for $\bar{v}$, we have $v_\alpha\in T_C(\bar{x})$. Since $v_\alpha\to v$ as $\alpha\to 0^+$, taking the limit yields $v\in T_C(\bar{x})$. Therefore, $T_C(\bar{x})=\{ v\in E\mid \langle \nabla\varphi_i(\bar{x}),v\rangle\le 0 \ \forall i\in I(\bar{x}) \}$.  

Set $A=\sum_{i\in I(\bar{x})}\R_+\nabla \varphi_i(\bar{x})$. Clearly, $\langle x^*,v \rangle\le 0$ for every $x^*\in A$ and $v\in T_C(\bar{x})$, and hence, $A \subset N_C(\bar{x})$. We claim that $A=N_C(\bar{x})$. Toward this end, we first show that $A$ is weakly$^*\!$ closed. Let $\{ x^*_\nu \}$ be a net in $A$ converging weakly$^*\!$ to $x^*$ with $x^*_\nu=\sum_{i\in I(\bar{x})}\lambda^{\nu}_i\nabla\varphi_i(\bar{x})$ for each $\nu$. Then $\langle x^*_\nu,\bar{v} \rangle \to \langle x^*,\bar{v} \rangle$. Since $\sum_{i\in I(\bar{x})}\lambda^{\nu}_i \langle \nabla\varphi_i(\bar{x}),\bar{v} \rangle\le -\varepsilon \sum_{i\in I(\bar{x})}\lambda^{\nu}_i\le 0$ for each $\nu$, the net $\{ (\lambda_{i_1}^\nu,\dots,\lambda_{i_k}^\nu) \}$ is bounded in $\R_+^k$, where $I(\bar{x})=\{i_1,\dots i_k \}$ with $k \le m$. Thus it has a subnet converging to some $(\lambda_{i_1},\dots,\lambda_{i_k})$ in $\R_+^k$. Hence, $x^*_\nu$ converges weakly$^*\!$ to $x^*=\sum_{i\in I(\bar{x})}\lambda_i\nabla\varphi_i(\bar{x})\in A$. Assume for a moment that there exists $p\in N_C(\bar{x})$ such that $p\not\in A$. By the separation theorem, there exists $y\in E$ such that $0\le \sup_{y^*\in A}\langle y^*,y \rangle<\langle p,y \rangle$. Since $A$ is a cone, we must have $\sup_{y^*\in A}\langle y^*,y \rangle=0$ implying that $\langle \nabla\varphi_i(\bar{x}),y \rangle\le 0$ for each $i\in I(\bar{x})$. Therefore, $y\in T_C(\bar{x})$ and $\langle p,y \rangle\le 0$. The obtained contradiction yields $A=N_C(\bar{x})$. \qed

\subsection{Gelfand Integrals of Multifunctions}
\label{subsec1}
Let $ I$ be a nonempty closed subset of the real line $\R$ with the Lebesgue measure and the Lebesgue $\sigma$-algebra $\L$. Denote by $\mathrm{Borel}(E^*,\mathit{w}^*)$ the Borel $\sigma$-\hspace{0pt}algebra of the dual space $E^*$ generated by the weak$^*\!$ topology. A function $f:I\to E^*$ is said to be \textit{weakly$^*\!$ scalarly measurable} if the scalar function $\langle f(\cdot),x\rangle$ is measurable for every $x\in E$. If $E$ is a separable Banach space, then $E^*$ is a locally convex Suslin space under the weak$^*\!$ topology. In this case, a function $f:I\to E^*$ is weakly$^*\!$ scalarly measurable if and only if it is measurable with respect to $\mathrm{Borel}(E^*,\mathit{w}^*)$; see \citet[Theorem III.36]{cv77}. A weakly$^*\!$ scalarly measurable function $f$ is said to be \textit{weakly$^*\!$ scalarly integrable} if $\langle f(\cdot),x \rangle$ is integrable for every $x\in E$. Further, a weakly$^*\!$ scalarly measurable function $f$ is said to be \textit{Gelfand integrable} (or \textit{weakly$^*\!$ integrable}) over a given set $A\in\L$ if there exists $x^*_A\in E^*$ such that $\langle x^*_A,x \rangle=\int_A\langle f(t),x \rangle dt$ for every $x\in E$. The element $x^*_A$ is called the \textit{Gelfand} (or \textit{weak$^*$}) \textit{integral} of $f$ over $A$ and is denoted by $\int_Afdt$. Note that every weakly$^*\!$ scalarly integrable function is Gelfand integrable over $I$ as shown in \citet[Theorem 11.52]{ab06}. 

A multifunction $\Phi:I\rightsquigarrow E^*$ is said to be \textit{upper measurable} if the set $\{ t\in I\mid \Phi(t)\subset O \}$ belongs to $\L$ for every weakly$^*\!$ open subset $O$ of $E^*$; $\Phi$ is said to be \textit{graph measurable} if the set $\mathrm{gph}\,\Phi:=\{ (t,x^*)\in I\times E^*\mid x^*\in \Phi(t) \}$ belongs to $\L\otimes \mathrm{Borel}(E^*,\mathit{w}^*)$; $\Phi$ is said to be \textit{weakly$^*\!$ scalarly measurable} if the scalar function $s(x,\Phi(\cdot)):I\to \R\cup\{ \pm\infty \}$ is measurable for every $x\in E$, where we set $s(x,\emptyset):=-\infty$. When $\Phi$ has empty values on a null set $N$, we can extend it to $I$ with nonempty values at every point by setting $\Phi(t):=\{ 0 \}$ for $t\in N$, preserving its upper, graph, and weak$^*$ scalar measurability. A function $f:I\to E^*$ is called a \textit{selector} of $\Phi$ if $f(t)\in \Phi(t)$ a.e.\ $t\in I$. If $E$ is separable, then an a.e.\ nonempty-valued multifunction $\Phi:I\rightsquigarrow E^*$ with measurable graph in $\L\otimes \mathrm{Borel}(E^*,\mathit{w}^*)$ admits a $\mathrm{Borel}(E^*,\mathit{w}^*)$-\hspace{0pt}measurable selector; see \citet[Theorem III.22]{cv77}. If $E$ is separable and $\Phi$ has a.e.\ nonempty, weakly$^*\!$ compact, convex values, then $\Phi$ is weakly$^*\!$ scalarly measurable if and only if it is upper measurable (see \citet[Theorem 18.31]{ab06}), and in this case, $\Phi$ admits a $\mathrm{Borel}(E^*,\mathit{w}^*)$-\hspace{0pt}measurable (or equivalently, weakly$^*\!$ scalarly measurable) selector; see \citet[Theorem 18.33]{ab06} or \citet[Corollary 3.1]{ckr11}. 

A multifunction $\Phi:I\rightsquigarrow E^*$ with a.e.\ nonempty values is \textit{integrably bounded} if there exists an integrable function $\gamma:I\to \R$ such that $\sup_{x^*\in \Phi(t)}\| x^* \|\le \gamma(t)$ a.e.\ $t\in I$. If $\Phi$ is integrably bounded with measurable graph, then it admits a Gelfand integrable selector whenever $E$ is separable. Denote by $\mathcal{S}^1_\Phi$ the set of Gelfand integrable selectors of $\Phi$. The Gelfand integral of $\Phi$ is conventionally defined as $\int\Phi dt:=\{ \int fdt \mid f\in \mathcal{S}^1_\Phi \}$. If $\Phi$ is an integrably bounded, weakly$^*$ closed, convex-valued multifunction with measurable graph, then $\int\Phi dt$ is nonempty, weakly$^*\!$ compact, and convex with $s(x,\int\Phi dt)=\int s(x,\Phi(t))dt$ for every $x\in E$ whenever $E$ is separable; see \citet[Proposition 2.3 and Theorem 4.5]{ckr11}.

\subsection{Gelfand Integrals of Clarke and Dini--Hadamard Subdifferential Mappings}
\label{subsec2}
An $\L\otimes \mathrm{Borel}(E,\| \cdot \|)$-measurable function $L:I\times E\to \R\cup \{ +\infty \}$ is called a \textit{normal integrand} if $L(t,\cdot)$ is lower semicontinuous on $E$ for every $t\in I$. For a given measurable function $x(\cdot):I\to E$, let $L:I\times E\to \R$ be a function such that (i) $L(\cdot,x)$ is measurable for every $x\in E$; (ii) there exist $\varepsilon>0$ and an integrable function $k:I\to \R$ such that $|L(t,x)-L(t,y)|\le k(t)\| x-y \|$ for every $x,y\in x(t)+\varepsilon B$ and $t\in I$. The Clarke subdifferential mapping $t\rightsquigarrow \partial^\circ_xL(t,x(t))$ is an integrably bounded multifunction from $I$ to $E^*$ with weakly$^*$ compact, convex values. In view of the fact that $L^\circ_x(t,x(t);v)=s(v,\partial^\circ_xL(t,x(t)))$ for every $t\in I$ and $v\in E$, the Clarke subdifferential mapping $\partial^\circ_xL(\cdot,x(\cdot)):I\rightsquigarrow E^*$ is weakly$^*\!$ scalarly measurable if and only if the Clarke directional derivative function $L^\circ_x(\cdot,x(\cdot);v):I\to \R$ is measurable for every $v\in E$. This holds in particular when $E$ is separable (see \citet[Lemma, p.\,78 and the proof of Theorem 2.7.8]{cl83}), and hence, in this case, $\partial^\circ_xL(\cdot,x(\cdot))$ admits a Gelfand integrable selector. 

We summarize the above result on the Gelfand integrability of the Clarke subdifferential mapping together with the results in Subsection \ref{subsec1} as follows.

\begin{prop}[\citet{ckr11,cl83}]
\label{cl}
Let $E$ be a separable Banach space and $I$ be a nonempty closed subset of $\R$. If, for a given measurable function $x:I\to E$, the function $L:I\times E\to \R$ satisfies the following conditions:
\begin{enumerate}[\rm(i)]
\item $L(\cdot,x)$ is measurable for every $x\in E$;
\item There exist $\varepsilon>0$ and an integrable function $k:I\to \R$ such that $|L(t,x)-L(t,y)|\le k(t)\| x-y \|$ for every $x,y\in x(t)+\varepsilon B$ and $t\in I$;
\end{enumerate}
then the Clarke subdifferential mapping $\partial^\circ_xL(\cdot,x(\cdot)):I\rightsquigarrow E^*$ has a Gelfand integrable selector and the Gelfand integral $\int\partial^\circ_xL(t,x(t))dt$ is weakly$^*\!$ compact and convex with 
\begin{align*}
s\left( v,\int_I\partial^\circ_xL(t,x(t))dt \right)=\int_I s\left( v,\partial^\circ_xL(t,x(t)) \right)dt=\int_I L^\circ_x(t,x(t);v)dt
\end{align*}
for every $v\in E$. 
\end{prop}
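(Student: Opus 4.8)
The plan is to reduce everything to the Gelfand-integral machinery already recorded in Subsection \ref{subsec1}: it suffices to check that $\Phi(\cdot):=\partial^\circ_xL(\cdot,x(\cdot)):I\twoheadrightarrow E^*$ is an integrably bounded, weakly$^*\!$ closed (indeed weakly$^*\!$ compact) convex-valued multifunction with measurable graph, and then the asserted support-function identity is immediate from $s(v,\Phi(t))=L^\circ_x(t,x(t);v)$.

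First I would establish the pointwise structural properties. By condition (ii), for every $t\in I$ the function $L(t,\cdot)$ is Lipschitz of rank $k(t)$ on $x(t)+\varepsilon B$, hence Lipschitz near $x(t)$; so, by the facts recalled in Section 2, $\partial^\circ_xL(t,x(t))$ is nonempty, weakly$^*\!$ compact and convex, and every $z^*\in\partial^\circ_xL(t,x(t))$ satisfies $\langle z^*,v\rangle\le L^\circ_x(t,x(t);v)\le k(t)\|v\|$ for all $v\in E$, whence $\sup_{z^*\in\Phi(t)}\|z^*\|\le k(t)$. Since $k$ is integrable, $\Phi$ is integrably bounded with nonempty weakly$^*\!$ compact convex values.

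Next I would handle measurability, which is the only nonroutine step. Because $E$ is separable, conditions (i) and (ii) say that $L$, restricted to a tube around the graph of $x(\cdot)$, is a Carath\'eodory function and hence jointly $\L\otimes\mathrm{Borel}(E,\|\cdot\|)$-measurable; composing with the measurable map $t\mapsto(t,x(t)+w)$ shows that $t\mapsto L(t,x(t)+w)$ is measurable for each fixed $w\in E$. Fixing $v\in E$, the Lipschitz estimate of condition (ii) lets one replace the $\limsup$ defining $L^\circ_x(t,x(t);v)$ by a countable $\limsup$ over perturbations $w$ in a fixed countable dense subset of $E$ and over rational step sizes $\theta\downarrow 0$, so that $t\mapsto L^\circ_x(t,x(t);v)$ is measurable. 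This is exactly the argument of \citet[Lemma, p.\,78 and proof of Theorem 2.7.8]{cl83}, on which I would simply rely. Since $L^\circ_x(t,x(t);v)=s(v,\Phi(t))$, the multifunction $\Phi$ is weakly$^*\!$ scalarly measurable; combined with its weakly$^*\!$ compact convex values and the separability of $E$, this gives upper measurability and measurable graph (Subsection \ref{subsec1}), so $\Phi$ admits a Gelfand integrable selector and $\int_I\Phi\,dt$ is well defined.

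Finally, applying the Gelfand-integral result for integrably bounded, weakly$^*\!$ closed, convex-valued multifunctions with measurable graph over a separable $E$ (\citealt[Proposition 2.3 and Theorem 4.5]{ckr11}), $\int_I\Phi\,dt$ is nonempty, weakly$^*\!$ compact and convex, and $s(v,\int_I\Phi\,dt)=\int_I s(v,\Phi(t))\,dt$ for every $v\in E$. Substituting $s(v,\Phi(t))=L^\circ_x(t,x(t);v)$ yields the displayed chain of equalities, completing the proof. The main obstacle, as indicated, is the measurability of $t\mapsto L^\circ_x(t,x(t);v)$ — the one place where separability of $E$ is essential — but since the excerpt already cites Clarke for it, the argument here is essentially an assembly of the cited facts.
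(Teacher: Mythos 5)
Your proposal is correct and follows essentially the same route as the paper: the paper likewise obtains nonempty, weakly$^*\!$ compact, convex values and the integrable bound $k(t)$ from condition (ii), gets weak$^*\!$ scalar measurability of $\partial^\circ_xL(\cdot,x(\cdot))$ from the measurability of $t\mapsto L^\circ_x(t,x(t);v)$ via \citet[Lemma, p.\,78 and Theorem 2.7.8]{cl83}, and then invokes \citet[Proposition 2.3 and Theorem 4.5]{ckr11} together with the identity $L^\circ_x(t,x(t);v)=s(v,\partial^\circ_xL(t,x(t)))$ to conclude. No gaps; your assembly of the cited facts matches the paper's own justification.
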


A similar result holds for Dini--Hadamard subdifferential mappings, but the proof is rather different from the one for Clarke subdifferential mappings since it involves a geometric aspect using the contingent cone and its polar. 

\begin{thm}
\label{thm1}
Under the hypothesis of Proposition \ref{cl}, if $L$ is a normal integrand with $\partial^-_xL(t,x(t))\ne \emptyset$ a.e.\ $t\in I$, then the Dini--Hadamard subdifferential mapping $\partial^-_xL(\cdot,x(\cdot)):I\rightsquigarrow E^*$ has a  Gelfand integrable selector and the Gelfand integral $\int\partial^-_xL(t,x(t))dt$ is weakly$^*\!$ compact and convex with 
\begin{align*}
s\left( v,\int_I\partial^-_xL(t,x(t))dt \right)=\int_I s\left( v,\partial^-_xL(t,x(t)) \right)dt
\end{align*}
for every $v\in E$. 
\end{thm}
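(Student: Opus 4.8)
The plan is to reduce the statement to a single measurability assertion — that the multifunction $\partial^-_xL(\cdot,x(\cdot))$ has measurable graph — and then invoke the Gelfand-integration results recalled in Subsection \ref{subsec1}. First note what is available for free. For every $t\in I$ the set $\partial^-_xL(t,x(t))$ is weakly$^*\!$ closed and convex, and since $\partial^-_xL(t,x(t))\subset\partial^\circ_xL(t,x(t))$ while the Clarke subdifferential of a function Lipschitz of rank $k(t)$ near $x(t)$ lies in $\{x^*\in E^*:\|x^*\|\le k(t)\}$ (\citealt[Proposition 2.1.2]{cl83}), the multifunction $\partial^-_xL(\cdot,x(\cdot))$ has weakly$^*\!$ compact convex values and is integrably bounded by $k$. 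By hypothesis its values are nonempty a.e.; replacing it on the exceptional null set by, say, $\partial^\circ_xL(t,x(t))$, we may assume nonemptiness everywhere without affecting the Gelfand integral. Granting that its graph lies in $\L\otimes\mathrm{Borel}(E^*,\mathit{w}^*)$, the result quoted in Subsection \ref{subsec1} (from \citealt{ckr11}) on integrably bounded, weakly$^*\!$ closed, convex-valued multifunctions with measurable graph then yields at once a Gelfand integrable selector, the weak$^*\!$ compactness and convexity of $\int_I\partial^-_xL(t,x(t))dt$, and the support-function identity. So everything hinges on graph measurability.

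To prove graph measurability I would test against a countable dense family of directions, exploiting that, since $L(t,\cdot)$ is Lipschitz of rank $k(t)$ on $x(t)+\varepsilon B$, for every $v\in E$ the lower directional derivative is finite, equals $\liminf_{\theta\downarrow 0}\bigl(L(t,x(t)+\theta v)-L(t,x(t))\bigr)/\theta$, and $v\mapsto L^-_x(t,x(t);v)$ is Lipschitz of rank $k(t)$ on all of $E$. Fix a countable dense set $\{v_n\}_{n\in\N}\subset E$. Using the continuity in $v$ of $\langle x^*,\cdot\rangle$ and of $L^-_x(t,x(t);\cdot)$ one obtains
$$
\partial^-_xL(t,x(t))=\bigcap_{n\in\N}\bigl\{x^*\in E^*\mid\langle x^*,v_n\rangle\le L^-_x(t,x(t);v_n)\bigr\}.
$$
For each fixed rational $\theta>0$ the map $t\mapsto\bigl(L(t,x(t)+\theta v_n)-L(t,x(t))\bigr)/\theta$ is measurable, because $L$ is a normal integrand (hence $\L\otimes\mathrm{Borel}(E,\|\cdot\|)$-measurable) and $t\mapsto x(t)$, $t\mapsto x(t)+\theta v_n$ are measurable; and since $\theta\mapsto L(t,x(t)+\theta v_n)$ is continuous for small $\theta$, the liminf over $\theta\downarrow 0$ may be computed as $\sup_m\inf_{\theta\in\mathbb{Q}\cap(0,1/m)}(\cdot)$, a countable combination of measurable functions, so $t\mapsto L^-_x(t,x(t);v_n)$ is measurable for every $n$. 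Then $(t,x^*)\mapsto\langle x^*,v_n\rangle-L^-_x(t,x(t);v_n)$ is $\L\otimes\mathrm{Borel}(E^*,\mathit{w}^*)$-measurable (the first term depends only on $x^*$ and is weakly$^*\!$ continuous, the second only on $t$ and is measurable), so each set in the intersection has measurable graph, and hence so does $\partial^-_xL(\cdot,x(\cdot))$ as a countable intersection. Invoking the cited result then finishes the proof.

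The main difficulty — and the reason this cannot follow the pattern of Proposition \ref{cl} — is that $v\mapsto L^-_x(t,x(t);v)$ is not sublinear, so $\partial^-_xL(t,x(t))$ is not the subdifferential of a measurable sublinear function and its support function need not equal $L^-_x(t,x(t);\cdot)$; there is no support-function representation through which to transfer measurability. One is therefore thrown onto the geometric description $\partial^-_xL(t,x(t))=\{x^*\mid(x^*,-1)\in K_{\mathrm{epi}\,L(t,\cdot)}(x(t),L(t,x(t)))^0\}$, and the countable-dense-directions argument above — whose genuinely delicate ingredient is the measurability of $t\mapsto L^-_x(t,x(t);v)$ — is the concrete way to implement it. A minor but necessary point throughout is to keep the arguments $x(t)+\theta v_n$ inside the $\varepsilon$-ball on which $L(t,\cdot)$ is Lipschitz, which is what makes the difference quotients finite, their dependence on $\theta$ continuous, and the reduction to rational $\theta$ legitimate.
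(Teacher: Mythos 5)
Your proposal is correct, and its overall architecture matches the paper's: both reduce everything to showing that the multifunction $t\mapsto\partial^-_xL(t,x(t))$ is measurable with nonempty (a.e.), weakly$^*$ compact, convex, integrably bounded values, and then invoke the Gelfand-integration facts for such multifunctions recorded in Subsection \ref{subsec1} (from \citealt{ckr11}) to get the integrable selector, the compactness/convexity of the integral, and the support-function identity. Where you genuinely diverge is in how measurability is obtained. The paper argues geometrically: since $L$ is a normal integrand, the epigraph mapping $t\mapsto\mathrm{epi}\,L(t,\cdot)$ is measurable with closed values, so by \citet[Theorem 8.5.1]{af90} the contingent-cone mapping $t\mapsto K_{\mathrm{epi}\,L(t,\cdot)}(x(t),L(t,x(t)))$ is measurable; Lemma \ref{lem1} (via a Castaing representation) then gives graph measurability of its polar, and intersecting with $E^*\times\{-1\}$ recovers the Gateaux subdifferential, from which a weakly$^*$ scalarly measurable selector is extracted. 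You instead work straight from the definition: using the uniform Lipschitz radius $\varepsilon$ of hypothesis (ii) you show $L^-_x(t,x(t);\cdot)$ is finite, Lipschitz of rank $k(t)$, computable along $u=v$ and rational $\theta$, hence $t\mapsto L^-_x(t,x(t);v_n)$ is measurable for a countable dense family $\{v_n\}$, and you write $\partial^-_xL(t,x(t))$ as the countable intersection of the product-measurable sets $\{\langle x^*,v_n\rangle\le L^-_x(t,x(t);v_n)\}$ — the density argument being legitimate precisely because both sides are continuous in $v$. Your route is more elementary and self-contained (no measurability theorem for contingent cones, no polar-mapping lemma), at the price of leaning on the Lipschitz hypothesis to reduce to countably many scalar tests; the paper's route is the one that generalizes beyond this Lipschitz setting and produces, as a by-product, Lemma \ref{lem1}, which is of independent use. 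Your handling of the exceptional null set (or simply the observation that selectors are only required to take values in the multifunction a.e.) is also adequate, so I see no gap.
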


Recall that a multifunction $\Gamma:I \rightsquigarrow E$ is said to be \textit{measurable} if the set $\{ t\in I\mid \Gamma(t)\cap O\ne \emptyset \}$ belongs to $\L$ for every open subset $O$ of $E$. Denote by $\Gamma^0:I\rightsquigarrow E^*$ the polar mapping of $\Gamma$ defined by $\Gamma^0(t):=\Gamma(t)^0$. 

\begin{lem}
\label{lem1}
Let $E$ be a separable Banach space. If $\Gamma:I\rightsquigarrow E$ is a measurable multifunction with nonempty closed values, then its polar mapping $\Gamma^0:I\rightsquigarrow E^*$ has the graph in $\L\otimes \mathrm{Borel}(E^*,\mathit{w}^*)$ and $\Gamma^0$ admits a weakly$^*$ scalarly measurable selector.  
\end{lem}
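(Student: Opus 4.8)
The plan is to realise $\mathrm{gph}\,\Gamma^0$ as a countable intersection of half‑spaces read off from a Castaing representation of $\Gamma$, and then to apply a measurable selection theorem on the Suslin space $(E^*,\mathit{w}^*)$. Since $E$ is separable and $\Gamma$ is measurable with nonempty closed values, the Castaing representation theorem (see \citealt{cv77,af90}) furnishes strongly measurable selectors $\{ f_n \}_{n\in \N}$ of $\Gamma$ with $\Gamma(t)=\mathrm{cl}\,\{ f_n(t)\mid n\in \N \}$ for every $t\in I$. Because each $x^*\in E^*$ is norm continuous, for every $t\in I$ we obtain
$$
\Gamma^0(t)=\{ x^*\in E^*\mid \langle x^*,x \rangle\le 0 \ \forall x\in \Gamma(t) \}=\bigcap_{n\in \N}\{ x^*\in E^*\mid \langle x^*,f_n(t) \rangle\le 0 \},
$$
so that $\mathrm{gph}\,\Gamma^0=\bigcap_{n\in \N}\bigl\{ (t,x^*)\in I\times E^*\mid \langle x^*,f_n(t) \rangle\le 0 \bigr\}$.

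Next I would check that each set in this intersection lies in $\L\otimes \mathrm{Borel}(E^*,\mathit{w}^*)$, i.e., that $(t,x^*)\mapsto \langle x^*,f_n(t) \rangle$ is $\L\otimes \mathrm{Borel}(E^*,\mathit{w}^*)$‑measurable. The duality pairing $(x^*,x)\mapsto \langle x^*,x \rangle$ on $E^*\times E$ is weakly$^*\!$ continuous in $x^*$ for fixed $x$ and norm continuous in $x$ for fixed $x^*$; since $E$ is separable, a Carath\'eodory‑type joint measurability argument shows that it is $\mathrm{Borel}(E^*,\mathit{w}^*)\otimes \mathrm{Borel}(E,\| \cdot \|)$‑measurable. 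As $f_n$ is strongly measurable it is $\mathrm{Borel}(E,\| \cdot \|)$‑measurable, so $(t,x^*)\mapsto (x^*,f_n(t))$ is measurable from $(I\times E^*,\L\otimes \mathrm{Borel}(E^*,\mathit{w}^*))$ into $(E^*\times E,\mathrm{Borel}(E^*,\mathit{w}^*)\otimes \mathrm{Borel}(E,\| \cdot \|))$; composing with the pairing gives the desired measurability, and hence $\mathrm{gph}\,\Gamma^0\in \L\otimes \mathrm{Borel}(E^*,\mathit{w}^*)$.

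Finally, $0\in \Gamma^0(t)$ for every $t\in I$ because $\Gamma(t)\ne \emptyset$, so $\Gamma^0$ is a nonempty‑valued multifunction whose graph lies in $\L\otimes \mathrm{Borel}(E^*,\mathit{w}^*)$; as $E$ is separable, $(E^*,\mathit{w}^*)$ is a Suslin space, so the Aumann--von Neumann selection theorem (\citealt[Theorem III.22]{cv77}) yields a $\mathrm{Borel}(E^*,\mathit{w}^*)$‑measurable selector of $\Gamma^0$, which is weakly$^*\!$ scalarly measurable by \citet[Theorem III.36]{cv77}. The main obstacle is the joint measurability step: because $(E^*,\mathit{w}^*)$ is not metrizable one cannot invoke the usual Carath\'eodory theorem directly on $I\times E^*$, and must instead route the argument through the separable space $E$, exploiting the strong measurability of the selectors $f_n$; everything else is bookkeeping with the product $\sigma$‑algebra.
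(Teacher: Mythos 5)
Your proposal is correct and follows essentially the same route as the paper: a Castaing representation of $\Gamma$ reduces $\Gamma^0(t)$ to a countable family of conditions $\langle x^*,f_n(t)\rangle\le 0$ (equivalently $s(x^*,\Gamma(t))=\sup_n\langle x^*,f_n(t)\rangle\le 0$), giving $\mathrm{gph}\,\Gamma^0\in \L\otimes\mathrm{Borel}(E^*,\mathit{w}^*)$, after which the von Neumann--Aumann selection theorem on the Suslin space $(E^*,\mathit{w}^*)$ produces a $\mathrm{Borel}(E^*,\mathit{w}^*)$-measurable, hence weakly$^*\!$ scalarly measurable, selector. The only difference is cosmetic: where the paper cites \citet[Theorem III.36]{cv77} for the joint measurability of $(t,x^*)\mapsto\langle x^*,f_n(t)\rangle$, you prove it directly by a Carath\'eodory-type approximation through a countable dense subset of $E$, which is a sound substitute.
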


\begin{proof}
Let $\{ g_n \}_{n\in \N}$ be a \textit{Castaing representation} of $\Gamma$, that is, each $g_n:I\to E$ is a measurable selector of $\Gamma$ such that $\mathrm{cl}\{ g_n(t)\mid n\in \N \}=\Gamma(t)$ for every $t\in I$. Since $(t,x^*)\mapsto \langle x^*,g_n(t)\rangle$ is $\L\otimes\mathrm{Borel}(E^*,\mathit{w}^*)$-measurable for each $n\in \N$ (see \citet[Theorem III.36]{cv77}) and $s(x^*,\Gamma(t))=\sup_n\langle x^*,g_n(t)\rangle$ for every $x^*\in E^*$ and $t\in I$, $\mathrm{gph}\,\Gamma^0$ is $\L\otimes \mathrm{Borel}(E^*,\mathit{w}^*)$-measurable. Therefore, $\Gamma^0$ admits a $\mathrm{Borel}(E^*,\mathit{w}^*)$-measurable, and hence, weakly$^*$ scalarly measurable selector.
\end{proof}

\begin{proof}[Proof of Theorem \ref{thm1}]
Define the multifunction $\Gamma:I\rightsquigarrow E\times \R$ by 
$$
\Gamma(t):=K_{\mathrm{epi}\,L(t,\cdot)}(x(t),L(t,x(t))). 
$$
Since $L$ is a normal integrand, the epigraph mapping $t\rightsquigarrow \mathrm{epi}\,L(t,\cdot)$ is a nonempty, closed-valued multifunction with its graph in $\L\otimes\mathrm{Borel}(E\times \R,\| \cdot \|)$; see \citet[Lemma VII.1]{cv77}. It follows from \citet[Theorem 8.5.1]{af90} that $\Gamma$ is a measurable multifunction with nonempty closed values. Then by Lemma \ref{lem1}, the polar mapping $\Gamma^0:I\rightsquigarrow E^*\times \R$ of $\Gamma$ has the graph in $\L\otimes \mathrm{Borel}(E^*\times \R,\mathit{w}^*)$. Define the multifunction $\Phi:I\rightsquigarrow E^*\times \R$ by 
$$
\Phi(t):=\Gamma^0(t)\cap (E^*\times \{ -1 \})=\left\{ (x^*,-1)\in K_{\mathrm{epi}\,L(t,\cdot)}(x(t),L(t,x(t)))^0 \right\}.
$$
Then $\Phi(t)\ne \emptyset$ a.e.\ $t\in I$ and $\mathrm{gph}\,\Phi$ belongs to $\L\otimes \mathrm{Borel}(E^*\times \R,\mathit{w}^*)$. Therefore, $\Phi$ admits a weakly$^*\!$ scalarly measurable selector, and hence, there exists a weakly$^*\!$ scalarly measurable function $f:I\to E^*$ such that $(f(t),-1)\in \Phi(t)$ a.e.\ $t\in I$. Since $f(t)\in \partial^-_xL(t,x(t))$ and the Dini--Hadamard subdifferential mapping $\partial^-_xL(\cdot,x(\cdot))$ is integrably bounded, $f$ is Gelfand integrable. Under the assumptions of the theorem, $\partial^-_xL(t,x(t))$ is nonempty, weakly$^*\!$ compact, and convex a.e.\ $t\in I$. Therefore, the Gelfand integral of $\partial^-_xL(\cdot,x(\cdot))$ is nonempty, weakly$^*\!$ compact and convex, and the desired equality holds as noted in Subsection \ref{subsec1}. 
\end{proof}

\begin{rem}
Note that unlike Clarke directional derivatives, the lack of convexity of the function $v\mapsto L^-_x(t,x(t);v)$ leads to the failure of the equality $L^-_x(t,x(t);v)=s(v,\partial^-_xL(t,x(t)))$ even if $\partial^-_xL(t,x(t))$ is nonempty. This is a  disadvantage of the use of Dini--Hadamard subdifferentials because $\partial^-_xL(t,x(t))$ may be empty even if $L(t,\cdot)$ is Lipschitz on $x(t)+\varepsilon B$. On the other hand, if $L(t,\cdot)$ is strictly differentiable at $x(t)$, then its Dini--Hadamard subdifferential is the singleton $\{ \nabla_xL(t,x(t)) \}$. See also Remark \ref{rem2} for a further discussion. 
\end{rem}

\section{Appendix II}
\subsection{Lipschitz Continuity of the Value Function}
\label{apdx3}
The following result is a special case of \citet[Theorem 1.2]{fr90}, which is an infinite-dimensional analogue of the celebrated Filippov theorem; see \citet{fi67}. Note that the solution concept adopted in \citet{fr90} is a mild solution to an evolution differential inclusion involving semigroups of unbounded linear operators. As in our case the semigroup is given by the identity operator, it follows from the Lebesgue differentiation theorem that the mild solution has a strong derivative that is Bochner integrable whenever $E$ is separable.    

\begin{lem}[\citet{fr90}]
\label{fr1}
Let $E$ be a separable Banach space and $[t_0,t_1]$ be any closed interval in $\R_+$. If $\mathrm{(H_4)}$, $\mathrm{(H_5)}$, and $\mathrm{(H_6)}$ hold, and $y(\cdot)\in W^{1,1}([t_0,t_1],E)$ is such that $t\mapsto d_{\Gamma(t,y(t))}(\dot{y}(t))$ is integrable with $y(t_0)=\xi\in E$, then for every $\xi'\in E$ and $\varepsilon>0$ there exists $x(\cdot)\in W^{1,1}([t_0,t_1],E)$ such that:
\begin{enumerate}[\rm (i)]
\item $\dot{x}(t)\in \Gamma(t,x(t))$ a.e.\ $t\in [t_0,t_1]$ with $x(t_0)=\xi'$;
\item $\| x(t)-y(t) \|\le \displaystyle \exp\left( \int_{t_0}^t\gamma(s)ds \right) \\ \hspace{3cm} \times \left( \|\xi-\xi'\|+\int_{t_0}^td_{\Gamma(s,y(s))}(\dot{y}(s))ds+\varepsilon(t-t_0) \right) 
$ \\
for every $t\in [t_0,t_1]$;
\item $\| \dot{x}(t)-\dot{y}(t) \|\le \displaystyle\exp\left( \int_{t_0}^t\gamma(s)ds \right)\gamma(t)(\|\xi-\xi'\|+\varepsilon(t-t_0))+d_{\Gamma(t,y(t))}(\dot{y}(t))+\varepsilon$ \\ a.e.\ $t\in [t_0,t_1]$. 
\end{enumerate}
\end{lem}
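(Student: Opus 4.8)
The plan is to prove this by a successive-approximation scheme in the spirit of the classical Filippov theorem, combining measurable selections with a Gronwall estimate. First I would record the measurability ingredients that make the construction run: since $E$ is separable, $\mathrm{(H_4)}$, $\mathrm{(H_5)}$, and the $x$-Lipschitz continuity furnished by $\mathrm{(H_6)}$ turn $\Gamma$ into a Carath\'eodory-type multifunction, so that for every measurable $z:[t_0,t_1]\to E$ the composition $t\mapsto\Gamma(t,z(t))$ is a measurable multifunction with nonempty closed values; consequently, for every measurable $w:[t_0,t_1]\to E$ and every positive measurable tolerance $\rho$, there is a measurable selection $t\mapsto v(t)\in\Gamma(t,z(t))$ with $\|v(t)-w(t)\|\le d_{\Gamma(t,z(t))}(w(t))+\rho(t)$ a.e., by Filippov's approximate selection lemma in a separable Banach space.

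Next I would build the iterates. Fix a small $\varrho>0$, to be absorbed into $\varepsilon$ at the end, and set $x_0:=y$. Choose a measurable selection $v_1(s)\in\Gamma(s,y(s))$ with $\|v_1(s)-\dot y(s)\|\le d_{\Gamma(s,y(s))}(\dot y(s))+\varrho$ and put $x_1(t):=\xi'+\int_{t_0}^t v_1(s)\,ds$. Inductively, given $x_n$ with $\dot x_n=v_n\in\Gamma(\cdot,x_{n-1}(\cdot))$, apply $\mathrm{(H_6)}$, which yields $\Gamma(s,x_{n-1}(s))\subset\Gamma(s,x_n(s))+\gamma_1(s)\,\|x_n(s)-x_{n-1}(s)\|\,B$, to select a measurable $v_{n+1}(s)\in\Gamma(s,x_n(s))$ with $\|v_{n+1}(s)-v_n(s)\|\le\gamma_1(s)\,\|x_n(s)-x_{n-1}(s)\|+\varrho\,2^{-n}$, and put $x_{n+1}(t):=\xi'+\int_{t_0}^t v_{n+1}(s)\,ds$. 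Integrating these pointwise bounds and using $\int_{t_0}^{t_1}\gamma_1<\infty$, an induction on $n$ gives $\|x_{n+1}(t)-x_n(t)\|\le\int_{t_0}^t\gamma_1(s)\,\|x_n(s)-x_{n-1}(s)\|\,ds+\varrho\,2^{-n}(t-t_0)$, hence $\|x_{n+1}(t)-x_n(t)\|\le m(t)\,\Lambda(t)^n/n!+\sigma_n(t)$ with $\Lambda(t):=\int_{t_0}^t\gamma_1$, $m(t):=\|\xi-\xi'\|+\int_{t_0}^t d_{\Gamma(s,y(s))}(\dot y(s))\,ds+\varrho(t-t_0)$, and $\sum_n\sigma_n$ uniformly small. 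Thus $\{x_n\}$ is Cauchy in $C([t_0,t_1],E)$ and $\{v_n\}=\{\dot x_n\}$ is Cauchy in $L^1([t_0,t_1],E)$; let $x_n\to x$ uniformly and $\dot x_n\to\dot x$ in $L^1$, so $x\in W^{1,1}([t_0,t_1],E)$ with $x(t_0)=\xi'$.

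Then I would verify admissibility and the two estimates. Along a subsequence $v_{n+1}(t)\to\dot x(t)$ a.e.; since $v_{n+1}(t)\in\Gamma(t,x_n(t))$, $\mathrm{(H_6)}$ gives $d_{\Gamma(t,x(t))}(v_{n+1}(t))\le\gamma_1(t)\,\|x_n(t)-x(t)\|\to0$, so the closedness of $\Gamma(t,x(t))$ and the continuity of $d_{\Gamma(t,x(t))}(\cdot)$ force $\dot x(t)\in\Gamma(t,x(t))$ a.e., which is (i). Summing the telescoping bounds for $\|x_n(t)-y(t)\|$ and invoking the integral Gronwall inequality yields $\|x(t)-y(t)\|\le\exp(\Lambda(t))\,m(t)$, and choosing $\varrho$ small enough absorbs the $\varrho(t-t_0)$ contribution of $m(t)$ into $\varepsilon(t-t_0)$, giving (ii). For (iii), from the construction $\|\dot x(t)-\dot y(t)\|\le\|v_1(t)-\dot y(t)\|+\sum_{n\ge1}\|v_{n+1}(t)-v_n(t)\|$, while $\mathrm{(H_6)}$ together with $\dot x(t)\in\Gamma(t,x(t))$ yields the pointwise defect estimate $\|\dot x(t)-\dot y(t)\|\le d_{\Gamma(t,y(t))}(\dot y(t))+\gamma_1(t)\,\|x(t)-y(t)\|+\varepsilon$; substituting (ii) and absorbing the $\varrho$-slack produces the asserted bound.

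The step I expect to be the real obstacle is not the iteration itself but the bookkeeping of constants: one must distribute the infinitely many $\varrho$-slacks across the selection steps so that, after multiplication by the Gronwall weight $\exp(\Lambda(t))$, the accumulated error is dominated exactly by the $\varepsilon(t-t_0)$ terms appearing in (ii) and (iii), and one must track how the initial-point defect $\|\xi-\xi'\|$ and the integrable velocity defect $d_{\Gamma(\cdot,y(\cdot))}(\dot y(\cdot))$ propagate separately in order to recover the precise form of the estimates. Everything else is supplied by the hypotheses: the joint measurability of $(t,x)\mapsto\Gamma(t,x)$ from $\mathrm{(H_5)}$ and $\mathrm{(H_6)}$ with $E$ separable, the closedness of the values from $\mathrm{(H_4)}$, the finiteness of $\Lambda(t_1)$ from the local integrability of $\gamma_1$, and the integrability of $t\mapsto d_{\Gamma(t,y(t))}(\dot y(t))$ assumed in the statement.
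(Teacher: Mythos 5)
You should first note that the paper never proves this lemma: it is quoted verbatim as a special case of Frankowska (1990, Theorem 1.2), so there is no in-paper argument to compare with, and your proposal has to be judged on whether it delivers the stated estimates. Your successive-approximation scheme is the standard Filippov argument and, in outline, it does give (i) and (ii): the approximate measurable selections are legitimate (the $\varrho$-slack is genuinely needed, since the closed values of $\Gamma$ need not contain nearest points in an infinite-dimensional $E$), $\gamma_1$ is integrable on $[t_0,t_1]$ by $\mathrm{(H_6)}$, the Cauchy estimates and the passage to the limit are sound, the identification $\dot{x}(t)\in\Gamma(t,x(t))$ via $d_{\Gamma(t,x(t))}(v_{n+1}(t))\le\gamma_1(t)\|x_n(t)-x(t)\|$ is correct, and the telescoping sum with the slack absorbed into $\varepsilon$ yields exactly (ii).

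The gap is in (iii). Writing $\Lambda(t):=\int_{t_0}^t\gamma_1(s)\,ds$ and $d(s):=d_{\Gamma(s,y(s))}(\dot{y}(s))$, your telescoping route gives $\|\dot{x}(t)-\dot{y}(t)\|\le d(t)+\gamma_1(t)\sum_{n}\|x_n(t)-x_{n-1}(t)\|+\cdots\le d(t)+\gamma_1(t)e^{\Lambda(t)}\bigl(\|\xi-\xi'\|+\int_{t_0}^t d(s)\,ds+\varepsilon(t-t_0)\bigr)+\varepsilon$, which carries the extra term $\gamma_1(t)e^{\Lambda(t)}\int_{t_0}^t d(s)\,ds$ absent from (iii). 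Your shortcut around this — that $\mathrm{(H_6)}$ together with $\dot{x}(t)\in\Gamma(t,x(t))$ ``yields'' $\|\dot{x}(t)-\dot{y}(t)\|\le d(t)+\gamma_1(t)\|x(t)-y(t)\|+\varepsilon$ — is a non sequitur: mere membership of $\dot{x}(t)$ in $\Gamma(t,x(t))$ bounds $d_{\Gamma(t,x(t))}(\dot x(t))$ (by zero), not $\|\dot{x}(t)-\dot{y}(t)\|$; that inequality would require $\dot{x}(t)$ to be an approximate nearest point of $\Gamma(t,x(t))$ to $\dot{y}(t)$, which your limit construction does not arrange, and even granting it, substituting (ii) reinstates the $\int_{t_0}^t d$ term. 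In fact no argument can remove it: take $E=\R$, $\Gamma(t,x)=\{x\}$, $\gamma_0\equiv1$, $\gamma_1\equiv2$, $y\equiv1$ on $[0,1]$, $\xi=\xi'=1$; the unique admissible trajectory from $\xi'$ is $x(t)=e^{t}$, and $|\dot{x}(t)-\dot{y}(t)|=e^{t}$ exceeds $2\varepsilon t e^{2t}+1+\varepsilon$ for small $\varepsilon$, so (iii) as printed cannot hold — the correct estimate (as in the cited reference) keeps the accumulated defect, e.g.\ in the form $\gamma_1(t)\|x(t)-y(t)\|+d(t)+\varepsilon$. So you should state and prove the weaker bound your construction actually produces; it is what the paper needs anyway, since in the proof of Theorem 3.3 the lemma is applied to admissible $y$ (so $d\equiv0$), and in Lemma B.2 the extra term integrates to $o(h)$.
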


\begin{proof}[Proof of Theorem \ref{thm2}]
Take any $(t,\xi)\in \R_+\times E$. Since, by $\mathrm{(H_6)}$, every admissible trajectory $x(\cdot)\in \A_{(t,\xi)}$ satisfies the inequality $\| \dot{x}(s) \|\le \gamma(s)+\gamma(s)\| x(s)\|$ a.e.\ $s\in [t,\infty)$ by $\mathrm{(H_6)}$, the Gronwall's inequality yields 
$$
\| x(s)\|\le \exp\left( \int_0^s \gamma(\tau)d\tau \right)\left( \| \xi \|+\int_0^s\gamma(\tau)d\tau \right)=:\gamma_{\| \xi \|}(s)<\infty
$$
for every $s\in [t,\infty)$. It follows from $\mathrm{(H_3)}$ and $\mathrm{(H_7)}$ that 
$$
|L(s,x(s),\dot{x}(s))|\le  l_1(s)+l_1(s)\gamma_{\| \xi \|}(s)+l_2(s)(\gamma(s)+\gamma(s)\gamma_{\| \xi \|}(s))=:k_{\| \xi \|}(s)
$$
and $k_{\| \xi \|}(\cdot)$ is integrable over $[t,\infty)$. Therefore, $V$ is bounded. Furthermore, it follows from $|\int_t^\infty L(s,x(s),\dot{x}(s))ds|\le \int_t^\infty k_{\| \xi \|}(s)ds$ that for every $\varepsilon>0$ there exists $t_0\in \R_+$ such that $|\int_T^\infty L(s,x(s),\dot{x}(s))ds|<\varepsilon$ for every $T>t_0$ and $x(\cdot)\in \A_{(t,\xi)}$. This implies that $\sup_{x(\cdot)\in \A_{(t,\xi)}}|V(T,x(T))|\to 0$ as $T\to \infty$ for every $(t,\xi)\in \R_+\times E$.

Next, we demonstrate the Lipschitz continuity of $V(t,\cdot)$. Let $\xi,\xi'\in E$ be arbitrary. Take any $\varepsilon>0$ and $T\in [t,\infty)$. Then by $\mathrm{(H_1)}$ and the Bellman principle of optimality, there exists $x(\cdot)\in \A_{(t,\xi)}$ such that $\int_t^TL(s,x(s),\dot{x}(s))ds+V(T,x(T))<V(t,\xi)+\varepsilon$. It follows from Lemma \ref{fr1} that there exists $x^T(\cdot)\in W^{1,1}([t,T],E)$ such that:
\begin{enumerate}[\rm (i)]
\item $\dot{x}^T(s)\in \Gamma(s,x^T(s))$ a.e.\ $s\in [t,T]$ with $x^T(t)=\xi'$; 
\item $\| x^T(s)-x(s) \|\le \displaystyle\exp\left( \int_t^s\gamma(\tau)d\tau \right)(\|\xi'-\xi\|+\varepsilon(s-t))$ for every $s\in [t,T]$; 
\item $\| \dot{x}^T(s)-\dot{x}(s) \|\le \displaystyle\exp\left( \int_t^s\gamma(\tau)d\tau \right)\gamma(s)(\|\xi'-\xi\|+\varepsilon(s-t))+\varepsilon$ a.e.\ $s\in [t,T]$. 
\end{enumerate}
Take any $x_T(\cdot)\in \A_{(T,x^T(T))}$ and define $y_T(\cdot)\in \A_{(t,\xi')}$ by $y_T(\cdot)=x^T(\cdot)$ on $[t,T]$ and $y_T(\cdot)=x_T(\cdot)$ on $(T,\infty)$. As observed in the above, we obtain 
$$
\lim_{T\to \infty}|V(T,x^T(T))|=\lim_{T\to \infty}|V(T,y_T(T))|\le \lim_{T\to \infty}\sup_{z(\cdot)\in \A_{(t,\xi')}}|V(T,z(T))|=0.
$$ 
Similarly, $\lim_{T\to \infty}V(T,x(T))=0$. By the Bellman principle of optimality, we have
\begin{align*}
V(t,\xi')-V(t,\xi)
& \le \int_t^T L(s,x^T(s),\dot{x}^T(s))ds+V(T,x^T(T)) \\
& \qquad -\int_t^T L(s,x(s),\dot{x}(s))ds-V(T,x(T))+\varepsilon  \\ 
& \le \int_t^T\left[ l_1(s)\| x^T(s)-x(s) \|+l_2(s)\| \dot{x}^T(s)-\dot{x}(s) \| \right]ds \\
& \qquad +V(T,x^T(T))-V(T,x(T))+\varepsilon \\ 
& \le k_1(t)(\| \xi'-\xi \|+\varepsilon(T-t))+k_2(t)(\| \xi'-\xi \|+\varepsilon(T-t)) \\
& \qquad +V(T,x^T(T))-V(T,x(T))+\varepsilon, 
\end{align*}
where we set in the last inequality $k_1(t):=\int_t^\infty\exp(\int_t^s\gamma(\tau)d\tau) l_1(s)ds$ and $k_2(t):=\int_t^\infty\exp(\int_t^s\gamma(\tau)d\tau)l_2(s)\gamma(s)ds$. Since $\varepsilon$ is arbitrary, we obtain 
$$
V(t,\xi')-V(t,\xi)\le k(t)\| \xi'-\xi \|+V(T,x^T(T))-V(T,x(T))
$$
for every $T\in [t,\infty)$ with $k(t):=k_1(t)+k_2(t)$. Then $k:\R_+\to \R_+$ is a continuous decreasing function with $k(t)\to 0$ as $t\to \infty$. Letting $T\to \infty$ in this inequality yields $V(t,\xi')-V(t,\xi)\le k(t)\| \xi'-\xi \|$. Since the role of $\xi$ and $\xi'$ is interchangeable in the above argument, we have demonstrated that $V(t,\cdot)$ is Lipschitz of rank $k(t)$ on $E$ for every $t\in \R_+$.

Finally, we show the lower semicontinuity of $V$. Toward this end, fix $t\in \R_+$. It suffices to show that $V(\cdot,\xi)$ is lower semicontinuous on $\R_+$ for every $\xi\in E$. Indeed, since we have
$$
V(t',\xi)-k(t')\| \xi'-\xi \|\le V(t',\xi')
$$
for every $(t',\xi')\in \R_+\times E$, taking the limit inferior in the both sides of the above inequality yields 
$$
\liminf_{t'\to t}V(t',\xi)\le \liminf_{(t',\xi')\to (t,\xi)}V(t',\xi'). 
$$

Take any $t'\in \R_+$. If $t'\in [t,\infty)$, then 
\begin{equation}
\label{eq6}
V(t,\xi)\le \int_t^{t'}L(s,x(s),\dot{x}(s))ds+V(t',x(t'))
\end{equation} 
for every $x(\cdot)\in \A_{(t,\xi)}$. Since $V(t',\cdot)$ is Lipschitz of rank $k(t')\le k(t)$, we obtain $|V(t',x(t'))-V(t',\xi)|\le k(t)\| x(t')-\xi \|\to 0$ as $t'\downarrow t$. Taking the limit inferior in the both sides of \eqref{eq6} yields 
$$
V(t,\xi)\le \liminf_{t'\downarrow t}V(t',\xi). 
$$
Similarly, if $t'\in [0,t)$ with $t>0$, then for every $\varepsilon>0$ there exists $y(\cdot)\in \A(t',\xi)$ such that 
\begin{equation}
\label{eq7}
\int_{t'}^tL(s,y(s),\dot{y}(s))ds+V(t,y(t))\le V(t',\xi)+\varepsilon
\end{equation}
Since $V(t,\cdot)$ is Lipschitz of rank $k(t)$, we have 
\begin{align*}
& |V(t,y(t))-V(t,\xi)| \\ {}\le{} 
& k(t)\| y(t)-\xi \|=k(t)\left\| \int_{t'}^t\dot{y}(s)ds \right\| \\ {}\le{} 
& k(t)\int_{t'}^t\left[ \gamma(s)+\gamma(s)\| y(s) \| \right]ds\le k(t)\int_{t'}^t\left[ \gamma(s)+\gamma(s)\gamma_{\| \xi \|}(s) \right]ds\to 0
\end{align*}
as $t'\uparrow t$ and
$$
\left| \int_{t'}^tL(s,y(s),\dot{y}(s))ds \right|\le \int_{t'}^tk_{\| \xi \|}(s)ds\to 0 
$$
as $t'\uparrow t$. Hence, taking the limit inferior in \eqref{eq7} yields 
$$
V(t,\xi)\le \liminf_{t'\uparrow t}V(t',\xi)+\varepsilon.
$$
Since $\varepsilon$ is arbitrary, we obtain 
$$
V(t,\xi)\le \liminf_{t'\to t}V(t',\xi). 
$$
Therefore, $V(\cdot,\xi)$ is lower semicontinuous at every $t\in \R_+$.
\end{proof}

\subsection{Subdifferentiability of the Value Function}
\label{apdx4}
Denote by $o(h)>0$ the Landau symbol with $\lim_{h\downarrow 0}h^{-1}o(h)=0$.

\begin{lem}
\label{lem2}
Suppose that $\mathrm{(H_4)}$, $\mathrm{(H_5')}$, and $\mathrm{(H_6)}$ hold. Let $t\in \R_+$ be such that the strong derivative $\dot{x}_0(t)$ exists and $v\in \Gamma(t,x_0(t))$ be arbitrarily fixed. Then for every $h>0$ there exists $x_h(\cdot)\in W^{1,1}([t,t+h],E)$ such that:
\begin{enumerate}[\rm(i)]
\item $\dot{x}_h(s)\in \Gamma(s,x_h(s))$ a.e.\ $s\in [t,t+h]$ with $x_h(t+h)=x_0(t+h)$; 
\item $\|x_h(t)-x_0(t)-h(\dot{x}_0(t)-v) \|=o(h)$; 
\item $\| \dot{x}_h(\cdot)-v\|_{L^1([t,t+h])}=o(h)$.
\end{enumerate} 
\end{lem}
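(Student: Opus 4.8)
The plan is to construct the comparison arc by running \emph{backward} from the common endpoint $x_0(t+h)$ with constant velocity $v$ and then correcting it to an admissible arc by the Filippov-type Lemma~\ref{fr1} applied to the time-reversed inclusion. Fix $h>0$ and set $w(s):=x_0(t+h)-(t+h-s)v$ on $[t,t+h]$, so that $\dot w\equiv v$ and $w(t+h)=x_0(t+h)$. Since $\dot x_0(t)$ exists, $\|x_0(t+h)-x_0(t)-h\dot x_0(t)\|=o(h)$, and therefore $w(t)-\big(x_0(t)+h(\dot x_0(t)-v)\big)=x_0(t+h)-x_0(t)-h\dot x_0(t)=o(h)$; hence (ii) will follow as soon as $\|x_h(t)-w(t)\|=o(h)$ is established. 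Also $\|x_0(t)-w(s)\|\le\|x_0(t)-x_0(t+h)\|+h\|v\|\le Mh$ for all small $h$ and all $s\in[t,t+h]$, with $M:=\|\dot x_0(t)\|+\|v\|+1$.

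The key step is the estimate $\int_t^{t+h}d_{\Gamma(s,w(s))}(v)\,ds=o(h)$. Put $\rho(s):=d_{\Gamma(s,x_0(t))}(v)$; since $v\in\Gamma(t,x_0(t))$, lower semicontinuity of $\Gamma(\cdot,x_0(t))$ at $t$ in $\mathrm{(H_5')}$ gives $\rho(s)\to 0$ as $s\downarrow t$, while $\mathrm{(H_6)}$ yields $\rho(s)\le\|v\|+\gamma_0(s)+\gamma_1(s)\|x_0(t)\|$, so $\rho$ is locally integrable and $\int_t^{t+h}\rho(s)\,ds=o(h)$. The Lipschitz part of $\mathrm{(H_6)}$ gives $d_{\Gamma(s,w(s))}(v)\le\rho(s)+\gamma_1(s)\|x_0(t)-w(s)\|\le\rho(s)+Mh\,\gamma_1(s)$, and since $\int_t^{t+h}\gamma_1\to 0$ as $h\downarrow 0$ by local integrability, the claim follows. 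I expect this to be the main obstacle: it is here that $\mathrm{(H_5')}$ is genuinely needed (plain measurability of $\Gamma(\cdot,x_0(t))$ would not force $\rho(s)\to 0$), and the rest is essentially bookkeeping of $o(h)$-terms through the time reversal.

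Finally I would reverse time: with $\tau:=t+h-s$, the inclusion $\tfrac{dx}{d\tau}\in-\Gamma(t+h-\tau,x)$ on $[0,h]$ again satisfies $\mathrm{(H_4)}$, $\mathrm{(H_5)}$ (lower semicontinuity from $\mathrm{(H_5')}$ implies measurability), and $\mathrm{(H_6)}$ with moduli $\gamma_i(t+h-\tau)$; the reversed reference arc $\tilde w(\tau):=w(t+h-\tau)=x_0(t+h)-\tau v$ has $\tilde w(0)=x_0(t+h)$, velocity $-v$, and $\int_0^h d_{-\Gamma(t+h-\tau,\tilde w(\tau))}(-v)\,d\tau=\int_t^{t+h}d_{\Gamma(s,w(s))}(v)\,ds=o(h)$. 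Applying Lemma~\ref{fr1} with $\xi=\xi'=x_0(t+h)$ and parameter $\varepsilon=\varepsilon(h)\downarrow 0$ (say $\varepsilon(h)=h$) produces $\tilde x_h\in W^{1,1}([0,h],E)$ solving the reversed inclusion with $\tilde x_h(0)=x_0(t+h)$; setting $x_h(s):=\tilde x_h(t+h-s)$ gives $\dot x_h(s)=-\tfrac{d\tilde x_h}{d\tau}(t+h-s)\in\Gamma(s,x_h(s))$ and $x_h(t+h)=x_0(t+h)$ exactly, which is (i). In the Gronwall estimates (ii) and (iii) of Lemma~\ref{fr1} the term $\|\xi-\xi'\|$ vanishes, so they collapse to $\|x_h(t)-w(t)\|=\|\tilde x_h(h)-\tilde w(h)\|\le e^{\int_t^{t+h}\gamma_1}\big(o(h)+\varepsilon(h)h\big)=o(h)$ and, after integration over $[0,h]$, $\|\dot x_h-v\|_{L^1([t,t+h])}\le e^{\int_t^{t+h}\gamma_1}\varepsilon(h)h\int_t^{t+h}\gamma_1+o(h)+\varepsilon(h)h=o(h)$, which together with the first paragraph yields (ii) and (iii).
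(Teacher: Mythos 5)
Your proposal is correct and follows essentially the same route as the paper: reverse time from the common endpoint $x_0(t+h)$, use the reference arc with constant velocity $v$, and apply Lemma~\ref{fr1} with $\varepsilon=h$ to the reversed inclusion, with $\mathrm{(H_6)}$ and the lower semicontinuity in $\mathrm{(H_5')}$ giving $\int_t^{t+h}d_{\Gamma(s,w(s))}(v)\,ds=o(h)$. The only cosmetic difference is that you bound this integral via the pointwise convergence $d_{\Gamma(s,x_0(t))}(v)\to 0$ as $s\downarrow t$, whereas the paper runs the same estimate through a supremum of the jointly upper semicontinuous distance $(s,h)\mapsto d_{\Gamma_h(s,x_0(t))}(-v)$.
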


\begin{proof}
Define $y_h(s):=x_0(t+h)-sv$ for $s\in [0,h]$ and the multifunction $\Gamma_h:\R\times E\rightsquigarrow E$ by
$$
\Gamma_h(s,x):=
\begin{cases}
-\Gamma(t+h-s,x) & \text{if $s\in [0,h]$}, \\
-\Gamma(t,x) & \text{if $s\in (h,\infty)$}.
\end{cases}
$$ 
By $\mathrm{(H_6)}$, we have $\Gamma_h(s,y_h(s))\subset \Gamma_h(s,x_0(t))+\gamma(t+h-s)(\| x_0(t+h)-x_0(t) \|+h\| v \|)B$ for every $s\in [0,h]$, which yields the inequality
$$
d_{\Gamma_h(s,y_h(s))}(\dot{y}_h(s))\le d_{\Gamma_h(s,x_0(t))}(-v)+\gamma(t+h-s)(\| x_0(t+h)-x_0(t) \|+h\| v \|).
$$
Since the multifunction $(s,h)\rightsquigarrow \Gamma_h(s,x_0(t))$ is lower semicontinuous, the distance function $(s,h)\mapsto d_{\Gamma_h(s,x_0(t))}(-v)$ is upper semicontinuous; see \citet[Corollary 1.4.17]{af90}. Let $\varphi(s,h):=d_{\Gamma_h(s,x_0(t))}(-v)$ and $\hat{\varphi}(h):=\sup_{s\in [0,h]}\varphi(s,h)$. Then for every $\varepsilon>0$ and $h\ge 0$ there exists $s_h\in [0,h]$ such that $\hat{\varphi}(h)<\varphi(s_h,h)+\varepsilon$. Since $s_h\to 0$ as $h\downarrow 0$ and $\varphi$ is upper semicontinuous at the origin with $\varphi(0,0)=d_{\Gamma(t,x_0(t))}(v)=0$, taking the limit superior of the above inequality yields $\limsup_{h\downarrow 0}\hat{\varphi}(h)\le \limsup_{h\downarrow 0}\varphi(s_h,h)+\varepsilon\le \varepsilon$. Since $\varepsilon$ is arbitrary, we have $\lim_{h\downarrow 0}\hat{\varphi}(h)=0$. Consequently, it follows from the inequality $\| x_0(t+h)-x_0(t) \|\le h\|\dot{x}_0(t)\|+o(h)$ that 
$$
d_{\Gamma_h(s,y_h(s))}(\dot{y}_h(s))\le \hat{\varphi}(h)+\gamma(t+h-s)(h(\|\dot{x}_0(t)\|+\| v \|)+o(h)).
$$

By Lemma \ref{fr1} applied with $\varepsilon=h$, there exists $z(\cdot)\in W^{1,1}([0,h],E)$ such that $\dot{z}(s)\in \Gamma_h(s,z(s))$ a.e.\ $s\in [0,h]$ with $z(0)=y_h(0)=x_0(t+h)$ satisfying 
\begin{align*}
\| z(s)-y_h(s) \|
&\le \exp\left( \int_0^h\gamma(t+h-\tau)d\tau \right)\left(\int_0^hd_{\Gamma_h(\tau,y_h(\tau))}(\dot{y}_h(\tau))d\tau+h^2 \right) \\ 
& \le\exp\left( \int_t^{t+h}\gamma(\tau)d\tau \right) \\ 
& \quad \times \left( h\hat{\varphi}(h)+(h(\| \dot{x}_0(t) \|+\| v \|)+o(h)) \int_t^{t+h}\gamma(\tau)d\tau+h^2\right)=o(h)
\end{align*}
and 
\begin{align*}
\| \dot{z}(s)-\dot{y}_h(s) \|
& \le h^2\exp\left( \int_0^h\gamma(t+h-\tau)d\tau \right)\gamma(t+h-s)+d_{\Gamma_h(s,y_h(s))}(\dot{y}_h(s))+h \\
& \le h^2\exp\left( \int_t^{t+h}\gamma(\tau)d\tau \right)\gamma(t+h-s)+\hat{\varphi}(h) \\ 
& \qquad +\gamma(t+h-s)(h(\|\dot{x}_0(t)\|+\| v \|)+o(h))+h 
\end{align*}
for a.e.\ $s\in [0,h]$. Integrating the both sides of the above inequality over $[0,h]$ yields 
\begin{align*}
& \| \dot{z}(\cdot)-\dot{y}_h(\cdot)\|_{L^1([0,h])} \\{}={}
& \int_t^{t+h}\gamma(\tau)d\tau\left( h^2\exp\left( \int_t^{t+h}\gamma(\tau)d\tau \right)+h(\|\dot{x}_0(t)\|+\| v \|)+o(h) \right) \\ {}
& \qquad +h\hat{\varphi}(h)+h^2= o(h). 
\end{align*}
Set $x_h(\tau):=z(t+h-\tau)$ for $\tau\in [t,t+h]$. Then $x_h(t+h)=z(0)=x_0(t+h)$ and $\dot{x}_h(\tau)=-\dot{z}(t+h-\tau)\in -\Gamma_h(t+h-\tau,x_h(\tau))=\Gamma(\tau,x_h(\tau))$ a.e.\ $\tau\in [t,t+h]$. Thus, condition (i) is verified. Since $\| x_h(t)-y_h(h) \|=\| z(h)-y_h(h) \|=o(h)$, we have $\|x_h(t)-x_0(t+h)+hv\|=o(h)$. Hence, $\|x_h(t)-x_0(t)-h(\dot{x}_0(t)-v) \|\le \|x_h(t)-x_0(t+h)+hv\|+\| x_0(t+h)-x_0(t)-h\dot{x}_0(t)\|=o(h)$ and we obtain condition (ii). In view of $\dot{x}_h(\tau)=-\dot{z}(t+h-\tau)$ and $\dot{y}_h(t+h-\tau)=-v$, we obtain $\| \dot{x}_h(\cdot)-v\|_{L^1([t,t+h])}=\| \dot{y}_h(\cdot)-\dot{z}(\cdot)\|_{L^1([0,h])}=o(h)$, which implies condition (iii).
\end{proof}

\begin{proof}[Proof of Theorem \ref{thm3}]
(i): Let $t>0$, $v\in \Gamma(t,x_0(t))$, and $x_h(\cdot)\in W^{1,1}([t,t+h],E)$ be as in the claim of Lemma \ref{lem2}. By condition (ii) of Lemma \ref{lem2}, for every $s\in [t,t+h]$ we have 
\begin{align*}
\| x_h(s)-x_0(t) \|
& \le \| x_h(s)-x_h(t) \|+\| x_h(t)-x_0(t) \| \\
& \le h\| \dot{x}_h(t) \|+o(h)+h\| \dot{x}_0(t)-v \|  \\
& \le h(\gamma(t)+\gamma(t)\| x_h(t) \|)+h\| \dot{x}_0(t)-v \|+o(h) \\
& \le h(\gamma(t)+\gamma(t)(\| x_0(t) \|+h\| \dot{x}_0(t)-v \|+o(h))+h\| \dot{x}_0(t)-v \| \\
& \qquad +o(h) \\
& =h(\gamma(t)+\gamma(t)\| x_0(t) \|)+h\| \dot{x}_0(t)-v \|+o(h),
\end{align*}
which yields the following estimates: 
\begin{align*}
& \left| \int_t^{t+h}L(s,x_h(s),\dot{x}_h(s))ds-\int_t^{t+h}L(s,x_0(t),v)ds \right| \\ {}\le{} 
& \int_t^{t+h}\left[ l_1(s)\| x_h(s)-x_0(t) \|+l_2(s) \| \dot{x}_h(s)-v \| \right]ds \\ {}\le{}
& (h(\gamma(t)+\gamma(t)\| x_0(t) \|)+h\| \dot{x}_0(t)-v \|+o(h))\int_t^{t+h}l_1(s)ds \\
& \quad +\sup_{s\in [t,t+h]}l_2(s) \| \dot{x}_h(\cdot)-v\|_{L^1([t,t+h])}=o(h).
\end{align*}
By the separability of $E$ and \citet[Theorem 2.5]{fpr95}, there exists a subset $I$ of $\R_+$ such that the Lebesgue measure of its complement $\R_+\setminus I$ is zero with $\lim_{h \downarrow 0}h^{-1}\int _t^{t+h} L(s,x,v)ds=L(t,x,v)$ for every $(t,x,v)\in I\times E\times E$. We thus obtain
$$
\lim_{h\downarrow 0}\frac{1}{h}\int_t^{t+h}L(s,x_h(s),\dot{x}_h(s))ds=\lim_{h\downarrow 0}\frac{1}{h}\int_t^{t+h}L(s,x_0(t),v)ds=L(t,x_0(t),v)
$$
for every $t\in I$. Let $t\in \R_+$ be a Lebesgue point of $L(\cdot,x_0(\cdot),\dot{x}_0(\cdot))$. By the Bellman principle of optimality, we have
$$
V(t,x_h(t))\le \int_t^{t+h}L(s,x_h(s),\dot{x}_h(s))ds+V(t+h,x_0(t+h)). 
$$
Subtracting $V(t,x_0(t))=\int_t^{t+h}L(s,x_0(s),\dot{x}_0(s))ds+V(t+h,x_0(t+h))$ from the both sides of the above inequality yields
\begin{align*}
& \int_t^{t+h}L(s,x_h(s),\dot{x}_h(s))ds-\int_t^{t+h}L(s,x_0(s),\dot{x}_0(s))ds \\ {}\ge{} 
& V(t,x_h(t))-V(t,x_0(t))  {}\ge{} V(t,x_0(t)+h(\dot{x}_0(t)-v))-V(t,x_0(t))-o(h)
\end{align*}
because of the Lipschitz continuity of $V(t,\cdot)$. Dividing the both sides of the above inequality by $h>0$ and taking the limit inferior as $h\to 0$ yield the inequality 
$$
V^-_x(t,x_0(t);\dot{x}_0(t)-v)\le L(t,x_0(t),v)-L(t,x_0(t),\dot{x}_0(t)). 
$$
Since $v\in \Gamma(t,x_0(t))$ is arbitrary, the above holds true for any such $v$. 

(ii): Take any $x^*\in \partial^-_xV(t,x_0(t))$. If $u\in K_{\Gamma(t,x_0(t))}(\dot{x}_0(t))$, then there exist a sequence $\{\theta_n\}_{n\in \N}$ of positive real numbers with $\theta_n\to 0$ and a sequence $\{ u_n \}_{n\in \N}$ in $E$ with $u_n\to u$ such that $\dot{x}_0(t)+\theta_nu_n\in \Gamma(t,x_0(t))$ for each $n\in \N$. Since it follows from condition (i) that 
$V^-_x(t,x_0(t);-\theta_nu_n)\le L(t,x_0(t),\dot{x}_0(t)+\theta_nu_n)-L(t,x_0(t),\dot{x}_0(t))$, we have 
$$
\langle x^*,-u_n \rangle\le \frac{L(t,x_0(t),\dot{x}_0(t)+\theta_nu_n)-L(t,x_0(t),\dot{x}_0(t))}{\theta_n}.
$$
Letting $n\to \infty$ in the both sides of the above inequality yields 
\begin{equation}
\label{eq11}
\langle -x^*,u \rangle\le L^+_y(t,x_0(t),\dot{x}_0(t);u)\le L^\circ_y(t,x_0(t),\dot{x}_0(t);u)
\end{equation}
for every $u\in K_{\Gamma(t,x_0(t))}(\dot{x}_0(t))$. Suppose, by way of contradiction, that $-x^*\not \in \partial^\circ_yL(t,x_0(t),\dot{x}_0(t))+N_{\Gamma(t,x_0(t))}(\dot{x}_0(t))$. Since $\partial^\circ_yL(t,x_0(t),\dot{x}_0(t))$ is weakly$^*\!$ compact and convex and $N_{\Gamma(t,x_0(t))}(\dot{x}_0(t))$ is weakly$^*\!$ closed and convex, $\partial^\circ_yL(t,x_0(t),\dot{x}_0(t))+N_{\Gamma(t,x_0(t))}(\dot{x}_0(t))$ is weakly$^*\!$ closed and convex. Then by the separation theorem, there exists $v\in E$ such that 
$$
\langle -x^*,v \rangle>\sup_{y^*\in \partial^\circ_yL(t,x_0(t),\dot{x}_0(t))}\langle y^*,v\rangle+\sup_{z^*\in N_{\Gamma(t,x_0(t))}(\dot{x}_0(t))}\langle z^*,v \rangle.
$$ 
Since $N_{\Gamma(t,x_0(t))}(\dot{x}_0(t))$ is a cone in $E^*$, we must have $\langle z^*,v\rangle\le 0$ for every $z^*\in N_{\Gamma(t,x_0(t))}(\dot{x}_0(t))$. This means that $v\in T_{\Gamma(t,x_0(t))}(\dot{x}_0(t))\subset K_{\Gamma(t,x_0(t))}(\dot{x}_0(t))$ by the bipolar theorem; see \citet[Theorem 2.4.3]{af90}. Since the support function of the Clarke subdifferential $\partial^\circ_yL(t,x_0(t),\dot{x}_0(t))$ coincides with the Clarke directional derivative $L^\circ_y(t,x_0(t),\dot{x}_0(t);v)$, the inequality above finally implies that $\langle -x^*,v \rangle>L^\circ_y(t,x_0(t),\dot{x}_0(t);v)$ with $v\in K_{\Gamma(t,x_0(t))}(\dot{x}_0(t))$, in contradiction with inequality \eqref{eq11}. Consequently, we have $-x^*\in \partial^\circ_yL(t,x_0(t),\dot{x}_0(t))+N_{\Gamma(t,x_0(t))}(\dot{x}_0(t))$. If $L(t,x_0(t),\cdot)$ is Gateaux differentiable at $\dot{x}_0(t)$, then \eqref{eq11} can be replaced by the inequality 
$$
\langle -x^*,u \rangle\le \nabla_y L(t,x_0(t),\dot{x}_0(t);u)
$$ 
for every $u\in K_{\Gamma(t,x_0(t))}(\dot{x}_0(t))$. This implies that the above argument is also valid when we replace $\partial^\circ_yL(t,x_0(t),\dot{x}_0(t))$ and $L^\circ_y(t,x_0(t),\dot{x}_0(t);v)$ respectively by $\nabla_y L(t,x_0(t),\dot{x}_0(t))$ and $\langle \nabla_y L(t,x_0(t),\dot{x}_0(t)),v \rangle$.
\end{proof}

\subsection{Proof of Necessary Conditions for Optimality}
\label{apdx2}
It should be underlined that unlike the real-valued case, locally absolutely continuous functions with values in Banach spaces fail to be strongly differentiable almost everywhere; see \citet[Examples 1 and 2]{pu88} or \citet[Example 4.2]{de92} for such examples. The failure of the strong differentiability of locally absolutely continuous functions disappears under the reflexivity assumption. Specifically, every locally absolutely function $p:\R_+\to E^*$ has the Bochner integrable strong derivative $\dot{p}(t)$ a.e.\ $t\in \R_+\setminus \{ 0 \}$ with $p(t)=\int_0^t\dot{p}(s)ds+p(0)$ for every $t\in \R_+$ whenever $E$ is reflexive; see \citet[Lemma, p.\,505]{ko67}. 

We construct an adjoint variable $p:\R_+\to E^*$ as a locally absolutely continuous function to express optimality conditions. However, we dispense with the reflexivity of $E$. The weak$^*\!$ differentiability of locally absolutely continuous functions is fundamental in the sequel and is virtually contained in the argument of the proof of \citet[Lemma]{ko67}. We provide a proof for the sake of completeness to make clear why the reflexivity of $E$ is irrelevant to weak$^*\!$ differentiability. See also \citet[Theorem 3.5]{ak00} for a strengthened version of the weak$^*\!$ differentiability of Lipschitz functions.  

\begin{lem}[\citet{ko67}]
\label{ko}
Let $E$ be a separable Banach space. Then every locally absolutely continuous function $p:\R_+\to E^*$ possesses the  weak$^*\!$ derivative $\dot{p}(t)$ a.e.\ $t\in \R_+$.  
\end{lem}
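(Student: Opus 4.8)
The plan is to reduce the statement to the classical fact that a real‑valued absolutely continuous function is differentiable almost everywhere, using separability of $E$ to control the exceptional null sets and a weak$^*\!$ compactness argument to promote scalar differentiability of the components of $p$ to weak$^*\!$ differentiability. It suffices to argue on each compact interval $[0,k]$, $k\in\N$, and then take a countable union of null sets.

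First I would fix a countable dense subset $\{x_n\}_{n\in\N}$ of $E$. For each $n$ the scalar function $t\mapsto\langle p(t),x_n\rangle$ is absolutely continuous on $[0,k]$, because $|\langle p(t),x_n\rangle-\langle p(\tau),x_n\rangle|\le\|x_n\|\,\|p(t)-p(\tau)\|$; hence it is differentiable off a null set $N_n$, with derivative $g_n(t)$. Next I would introduce the variation function $V:[0,k]\to\R_+$ of $p$, where $V(t)$ is the total variation of $p$ over $[0,t]$: a routine covering argument shows that $V$ is finite, nondecreasing, and absolutely continuous whenever $p$ is, so $V'$ exists off a null set $N_0$. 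Put $N:=N_0\cup\{0\}\cup\bigcup_n N_n$, which is Lebesgue null.

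Then, fixing $t\in[0,k]\setminus N$ and writing $q_h:=h^{-1}(p(t+h)-p(t))$ for small $h\ne 0$, I would first establish the key bound: from $\|p(t+h)-p(t)\|\le|V(t+h)-V(t)|$ one gets $\|q_h\|\le|h|^{-1}|V(t+h)-V(t)|\to V'(t)$, so once $|h|$ is small $q_h$ lies in the weak$^*\!$ compact ball $\{x^*\in E^*\mid\|x^*\|\le\rho\}$ with $\rho:=V'(t)+1$ (Banach--Alaoglu). Any weak$^*\!$ cluster point $\ell$ of the net $(q_h)_{h\to0}$ then satisfies $\langle\ell,x_n\rangle=\lim_{h\to0}\langle q_h,x_n\rangle=g_n(t)$ for every $n$, so by density of $\{x_n\}$ and $\ell\in E^*$ the cluster point is unique; a net eventually contained in a compact set and having a unique cluster point converges to it, whence $q_h\to\ell$ weak$^*$, i.e.\ $p$ is weak$^*\!$ differentiable at $t$ with $\dot p(t)=\ell$.

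I expect the only delicate point to be the uniform bound $\|q_h\|\le\rho$: it is precisely here that local absolute continuity — rather than mere continuity — is used, through the a.e.\ differentiability of the scalar variation function, and without such a bound the net $(q_h)$ may fail to have weak$^*\!$ cluster points and the argument breaks down. The verification that $V$ inherits absolute continuity from $p$ (given $\varepsilon>0$, take $\delta$ from the absolute continuity of $p$; refine any finite disjoint family of intervals of total length $<\delta$ by the partitions realizing the variation on each piece, and observe that the refinement still has total length $<\delta$) and the finiteness of $V$ on $[0,k]$ are both routine and I would dispatch them in a line. Note that the reflexivity of $E$ is never invoked; separability enters only to make $\bigcup_n N_n$ a null set.
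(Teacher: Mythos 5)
Your argument is correct, and its overall architecture matches the paper's: scalar absolute continuity of $t\mapsto\langle p(t),x_n\rangle$ for a countable dense set $\{x_n\}$ gives derivatives off a countable union of null sets, a uniform bound on the difference quotients $q_h$ puts them in a weak$^*\!$ compact ball, and totality of $\{x_n\}$ forces a unique weak$^*\!$ cluster point, hence weak$^*\!$ convergence. Where you differ is in how the key bound is obtained. The paper does not introduce the variation \emph{function} $V$; it defines $P^{\pm}(t)=\limsup_{h\to 0^{\pm}}\|q_h\|$, notes these are measurable by continuity of $p$, and shows they are finite a.e.\ by a contradiction argument: if $\{P^+=\infty\}$ had positive measure, a greedy selection of intervals on which the increment of $p$ exceeds $\tfrac{2}{\lambda}\mathrm{var}(p,[0,\tau])$ per unit length would cover a set of measure $>\lambda/2$ and force the total variation to exceed itself. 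You instead invoke the classical facts that the total variation function $V$ of a (locally) absolutely continuous $p$ is itself absolutely continuous (your refinement argument for this is the standard one and is fine), hence differentiable a.e., and then $\|q_h\|\le |h|^{-1}|V(t+h)-V(t)|\to V'(t)$ gives the bound directly, even quantitatively ($\limsup_h\|q_h\|\le V'(t)$). Your route is arguably cleaner and avoids the covering/contradiction step, at the cost of importing the lemma on absolute continuity of $V$; note in passing that a.e.\ differentiability of $V$ already follows from monotonicity alone, so the same scheme in fact works for continuous functions of locally bounded variation, whereas the absolute continuity of $p$ is what the paper's surrounding results actually need. Both proofs correctly avoid any reflexivity assumption, using separability only to control the null sets and to make $\{x_n\}$ total.
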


\begin{proof}
Define the variation of $p:\R_+\to E^*$ over the compact interval $[0,\tau]$ by $\mathrm{var}(p,[0,\tau]):=\sup\sum_{i=1}^n\| p(t_i)-p(\tau_i)\|$, where the supremum is taken over all finite sets of points $t_i,\tau_i\in [0,\tau]$ with $0\le t_1<\tau_1\le t_2<\tau_2\le \cdots\le t_n<\tau_n\le \tau$. Since $p$ is locally absolutely continuous, $\mathrm{var}(p,[0,\tau])<\infty$ for every $\tau>0$. Define $p_h(t):=h^{-1}(p(t+h)-p(t))$ for $t\in [0,\tau]$ and $h\ne 0$, and $P^+(t):=\limsup_{h\downarrow 0}\| p_h(t) \|$ and $P^-(t):=\limsup_{h\uparrow 0}\| p_h(t) \|$. Since $p$ is continuous, so is $p_h$. Hence, $P^+$ and $P^-$ are measurable on $[0,\tau]$. We claim that $P^+(t)$ and $P^-(t)$ are finite a.e.\ $t\in [0,\tau]$. Suppose to the contrary that the Lebesgue measure $\lambda$ of the set $\{t\in [0,\tau]\mid P^+(t)=\infty \}$ is positive. Let 
$$
A_n:=\left\{ t\in [0,\tau]\left| \begin{array}{l} \hspace{0.2cm} \sup \left[ \left\| \dfrac{p(t+h)-p(t)}{h} \right\| \left|\ h{}\ge{}\dfrac{1}{n},\,0\le t<t+h\le \tau \right.\right]  \\ \ge \dfrac{2}{\lambda} \mathrm{var}(p,[0,\tau]) \end{array} \right.\hspace{-0.2cm}\right\}.
$$
Then each $A_n$ is a closed set and $\{t\in [0,\tau]\mid P^+(t)=\infty \}\subset \bigcup_{n\in \N} A_n$. Since $\{ A_n \}_{n\in \N}$ is an increasing sequence, $|A_n|>\lambda/2$ for some $n\in \N$, where $|A_n|$ denotes the Lebesgue measure of $A_n$. Let $\{ t_i \}_{i\in \N}$ and $\{ h_i \}_{i\in \N}$ be defined inductively by 
\begin{align*}
& t_1:=\inf A_n, \\
& t_{i+1}:=\inf\{ t\in A_n\mid t\ge t_i+h_i \}, \text{ and} \\
& h_i:=\sup\left\{ h>0\mid t_i+h\le \tau,\,\left\| \frac{p(t_i+h)-p(t_i)}{h} \right\|\ge \frac{2}{\lambda}\mathrm{var}(p,[0,\tau]) \right\}.
\end{align*}
Then by construction, we have $A_n\subset\bigcup_{i\in \N}[t_i,t_i+h_i]$, and hence, $|A_n|\le \sum_{i\in \N}h_i$. Consequently, 
$$
\sum_{i\in \N}\| p(t_i+h_i)-p(t_i) \|\ge \frac{2}{\lambda}\mathrm{var}(p,[0,\tau])\sum_{i\in \N}h_i\ge \frac{2}{\lambda}\mathrm{var}(p,[0,\tau])|A_n|>\mathrm{var}(p,[0,\tau]),
$$
a contradiction. In the same way we show that the set $\{t\in [0,\tau]\mid P^-(t)=\infty \}$ is of Lebesgue measure zero. Therefore, there exists a null set $N_0\subset [0,\tau]$ such that for every $t\in [0,\tau]\setminus N_0$ the set $\{ p_h(t)\mid h\ne 0 \}$ is bounded in $E^*$. 

In view of the separability of $E$, there is a countable dense subset $\{ v_i \}_{i\in \N}$ of $E$. Since each scalar function $\varphi_i(t):=\langle p(t),v_i \rangle$ is absolutely continuous on $[0,\tau]$, its derivative $\dot{\varphi}_i(t)$ exists except at a point of a null set $N_i\subset [0,\tau]$. This means that $\dot{\varphi}_i(t)=\lim_{h\to 0}\langle p_h(t),v_i \rangle$ for every $i\in \N$ and $t\in [0,\tau]\setminus \bigcup_{j\in \N}N_j$. Recalling that $\{ p_h(t)\mid h\ne 0 \}$ is relatively weakly$^*\!$ compact for every $t\in [0,\tau]\setminus N_0$, it has a subnet (which we do not relabel) that converges weakly$^*\!$ to an element in $E^*$. Therefore, $\dot{p}(t)=\mathit{w}^*\text{-}\lim_{h\to 0}p_h(t)$ exists for every $t\in [0,\tau]\setminus \bigcup_{i=0}^\infty N_i$ because $\{ v_i \}_{i\in \N}$ is a total family of $E$.
\end{proof}

\begin{proof}[Proof of Theorem \ref{thm4}]
Let $t\in \R_+$ be arbitrarily given and $\eta>0$ be as in $\mathrm{(H_8)}$. Take any $x^*\in \partial^-_xV(0,x_0(0))$ and let $f:\R_+\to E^*$ be a Gelfand integrable selector of the Dini--Hadamard superdifferential mapping $s\rightsquigarrow \partial^+_xL(s,x_0(s),\dot{x}_0(s))$, whose existence is guaranteed in Theorem \ref{thm1}. Define $p(t)=\int_0^tf(s)ds-x^*$ as a Gelfand integral. We claim that $-p(t)\in \partial^-_xV(t,x_0(t))$. To this end, fix any $v\in E$ and consider the local perturbation of $x_0(\cdot)$ over $[0,t]$ given by $x_\theta(s):=x_0(s)+\theta v$ for $s\in [0,t]$. By construction, $\dot{x}_\theta(s)=\dot{x}_0(s)$ a.e.\ $s\in [0,t]$ and $x_\theta(s)\in x_0(s)+\eta B$ whenever $0<\theta\le (1+\| v \|)^{-1}\eta$, and hence, $(x_\theta(s),\dot{x}_\theta(s))\in \mathrm{gph}\,\Gamma(s,\cdot)$ a.e.\ $s\in [0,t]$. By the Bellman principle of optimality, we have
$$
V(0,x_\theta(0))\le \int_0^tL(s,x_\theta(s),\dot{x}_\theta(s))ds+V(t,x_\theta(t)). 
$$
Subtracting $V(0,x_0(0))=\int_0^tL(s,x_0(s),\dot{x}_0(s))ds+V(t,x_0(t))$ from the both sides of the above inequality yields
\begin{align*}
V(0,x_\theta(0))-V(0,x_0(0))
& \le \int_0^t\left[ L(s,x_\theta(s),\dot{x}_\theta(s))-L(s,x_0(s),\dot{x}_0(s)) \right]ds \\
& \qquad +V(t,x_\theta(t))-V(t,x_0(t)).
\end{align*}
Let $\{ \theta_n \}_{n\in \N}$ be a sequence of positive real numbers with $\theta_n\to 0$ such that
$$
V_x^-(t,x_0(t);v)=\lim_{n\to \infty}\frac{V(t,x_0(t)+\theta_nv)-V(t,x_0(t))}{\theta_n}.
$$
Dividing the both sides of the above inequality by $\theta_n$ and taking the limit as $n\to \infty$ yields 
\begin{align*}
V^-_x(0,x_0(0);v)
& \le \int_0^tL^+_x(s,x_0(s),\dot{x}_0(s);v)ds+V^-_x(t,x_0(t);v) \\
& \le \int_0^t\langle f(s),v \rangle ds+V^-_x(t,x_0(t);v) \\
& =\langle p(t),v \rangle+\langle x^*,v \rangle+V^-_x(t,x_0(t);v)
\end{align*}
for every $v\in E$, where we employ the Lebesgue dominated convergence theorem and Fatou's lemma to derive that
\begin{align*}
& \limsup_{n\to \infty}\int_0^t\frac{L(s,x_0(s)+\theta_n v,\dot{x}_0(s))-L(s,x_0(s),\dot{x}_0(s))}{\theta_n}ds \\
{}\le{} 
& \int_0^tL^+_x(s,x_0(s),\dot{x}_0(s);v)ds.
\end{align*}
On the other hand, $\langle x^*,v \rangle\le V^-_x(0,x_0(0);v)$. Hence, $\langle -p(t),v \rangle\le V^-_x(t,x_0(t);v)$ for every $v\in E$ and thus our claim is true. 

Since $\langle p(t),y \rangle=\int_0^t\langle f(s),y \rangle ds-\langle x^*,y \rangle$ for every $t\in \R_+$ and $y\in E$ with $|\langle f(s),y \rangle|\le \| f(s) \| \| y \|\le l_1(s) \| y  ||$, we get $|\langle p(t+h)-p(t),y \rangle| \le \| y \|\int_t^{t+h} l_1(s)ds$, and therefore, $\| p(t+h)-p(t) \|\le \int_t^{t+h} l_1(s)ds$ for every $h>0$. This means that the function $p:\R_+\to E^*$ constructed above is locally absolutely continuous. In view of Lemma \ref{ko}, the weak$^*\!$ derivative $\dot{p}(t)=f(t)$ exists a.e.\ $t\in \R_+$. This demonstrates that the adjoint inclusions (i) and (iii) hold. Since Theorem \ref{thm3} and condition (i) yield $\langle p(t),\dot{x}_0(t) \rangle-L(t,x_0(t),\dot{x}_0(t))\ge \langle p(t),y \rangle-L(t,x_0(t),y)$ for every $y\in \Gamma(t,x_0(t))$, the maximum principle (iv) holds. Thus, for a.e.\ $t\in \R_+$ and every $v\in T_{\Gamma(t,x_0(t))}(\dot{x}_0(t))$, we have $\langle p(t),v \rangle \le L^-_y(t,x_0(t),\dot{x}_0(t);v)\le L^\circ_y(t,x_0(t),\dot{x}_0(t);v)$ and condition (ii) follows from the separation argument as in the proof of Theorem \ref{thm3}(ii). To verify the transversality condition (v) at infinity, recall that by Theorem \ref{thm2}, $V(t,\cdot)$ is Lipschitz of rank $k(t)$ with $k(t)\to 0$ as $t\to \infty$. Therefore, $\| p(t) \|\le k(t)\to 0$.  
\end{proof}

\begin{proof}[Proof of Theorem \ref{thm5}]
Let $t\in \R_+$ and $v\in E$. Consider the variational equation:
\begin{equation}
\label{eq4}
\dot{w}(s)=\nabla_xf(s,x_0(s),u_0(s))w(s) \ \text{a.e.\ $s\in [0,t]$}, \quad w(t)=v.
\end{equation}
In view of the separability of $E$ and $\mathrm{(H}_9)$, a unique mild solution $w(\cdot)\in W^{1,1}([0,t],E)$ to \eqref{eq4} satisfies 
$$
w(s)=v-\int_s^t\nabla_xf(\tau,x_0(\tau),u_0(\tau))w(\tau)d\tau \quad \text{for every $s\in [0,t]$}
$$
and its strong derivative $\dot{w}(s)$ exists a.e.\ $s\in [0,t]$ and satisfies \eqref{eq4} by the Lebesgue differentiation theorem. It follows from \citet[Theorem 4.2]{fr90} (applied to $F(s,x)\equiv \{ f(s,x,u_0(s)) \}$ and $A\equiv 0$) that for every $\theta>0$ there exists a mild solution $x_\theta(\cdot)$ to 
$$
\dot{x}(s)=f(s,x(s),u_0(s)) \ \text{a.e.\ $s\in [0,t]$}, \quad x(t)=x_0(t)+\theta v
$$
such that $(x_\theta(s)-x_0(s))/\theta \to w(s)$ uniformly in $s\in [0,t]$ as $\theta\to 0$. 

Take any $x^*\in \partial^-_xV(0,x_0(0))$ and let $g:\R_+\to E^*$ be a locally Bochner integrable selector from $s\rightsquigarrow \partial^+_x\tilde{L}(s,x_0(s),u_0(s))$. Consider the adjoint system:
\begin{equation}
\label{eq3}
-\dot{p}(s)=\nabla_xf(s,x_0(s),u_0(s))^*p(s)-g(s) \ \text{a.e.\ $s\in [0,t]$}, \quad p(0)=-x^*. 
\end{equation}
Since the mapping $(s,x,u)\mapsto \nabla_xf(s,x,u)^*$ has separable values in $E^*$ in view of $\mathrm{(H}_9)$, a unique mild solution to \eqref{eq3} does exist. As in the proof of Theorem \ref{thm4} (via the Bellman principle of optimality), we obtain the inequality 
\begin{align*}
& V(0,x_\theta(0))-V(0,x_0(0)) \\ {}\le{}
& \int_0^t\left[ L(s,x_\theta(s),f(s,x_\theta(s),u_0(s))-L(s,x_0(s),f(s,x_0(s),u_0(s)) \right]ds \\ 
& \quad +V(t,x_0(t)+\theta v)-V(t,x_0(t)).
\end{align*}
Divide the both sides of the above inequality by $\theta$ and let $\theta\to 0$ to get
$$
\langle x^*,w(0) \rangle \le \int_0^t\langle g(s),w(s) \rangle ds+V_x^-(t,x_0(t);v). 
$$
It follows from the a.e.\ strong differentiability of $p(\cdot)$ and $w(\cdot)$ that
\begin{align*}
\int_0^t\langle g(s),w(s) \rangle ds
& =\int_0^t \langle \dot{p}(s)+\nabla_xf(s,x_0(s),u_0(s))^*p(s),w(s) \rangle ds \\
& =\int_0^t \left[ \langle \dot{p}(s),w(s) \rangle+\langle p(s),\nabla_xf(s,x_0(s),u_0(s))w(s) \rangle \right]ds \\
& =\int_0^t \left[ \langle \dot{p}(s),w(s) \rangle+\langle p(s),\dot{w}(s) \rangle \right]ds \\
& =\int_0^t\frac{d}{ds}\langle p(s),w(s) \rangle ds=\langle p(t),v \rangle-\langle p(0),w(0) \rangle. 
\end{align*}
Hence, $\langle -p(t),v \rangle\le V_x^-(t,x_0(t);v)$ for every $v\in E$. This means that $-p(t)\in \partial^-_xV(t,x_0(t))$. This being true for every $t\in \R_+$, we deduce the adjoint inclusions (i) and (iii). The rest of the conditions follows as in  the proof of Theorem \ref{thm4}. 
\end{proof}

}
\end{document}